\newcommand{\bbC}{\mathbb{C}}
\newcommand{\bbP}{\mathbb{P}}
\newcommand{\bbZ}{\mathbb{Z}}
\newcommand\Pic{{\text{Pic}}}
\newtheorem{theorem}{Theorem}[section]
\newtheorem{lemma}[theorem]{Lemma}
\newtheorem{proposition}[theorem]{Proposition}
\theoremstyle{definition}     
\theoremstyle{remark}
\newtheorem{remark}[theorem]{Remark}
\numberwithin{equation}{section}
\DeclareMathOperator{\rank}{rank}
\begin{document}
\title[Toward a geometric construction of fake projective planes]
{Toward a geometric construction of fake projective planes}

\author[J. Keum]{JongHae Keum }
\address{School of Mathematics, Korea Institute for Advanced
Study, Seoul 130-722, Korea } \email{jhkeum@kias.re.kr}
\thanks{Research supported by Basic Science Research Program through the National Research Foundation(NRF)
of Korea funded by the Ministry of Education, Science and
Technology (2007-C00002)} \subjclass[2000] {14J29; 14J27}
\keywords{fake projective plane; ${\mathbb Q}$-homology projective
plane; surface of general type; properly elliptic surface}
\begin{abstract} We give a criterion for a projective surface to become a quotient
of a fake projective plane. We also give a detailed information on
the elliptic fibration of a $(2,3)$-elliptic surface that is the
minimal resolution of a quotient of a fake projective plane. As a
consequence, we give a classification of ${\mathbb Q}$-homology
projective planes with cusps only.
\end{abstract}
\maketitle
It is known that a compact complex surface with the same Betti numbers as the
complex projective plane $\bbP^2$ is projective (see e.g. \cite{BHPV}).
Such a surface is called {\it a fake projective plane} if it is not
isomorphic to $\bbP^2$.

Let $X$ be a fake projective plane. Then its canonical bundle is
ample. So a fake projective plane is exactly a surface of general
type with $p_g(X)=0$ and $c_1(X)^2=3c_2(X)=9$. By \cite{Aubin} and
\cite{Yau}, its universal cover is the unit 2-ball ${\bf
B}\subset\bbC^2$ and hence its fundamental group $\pi_1(X)$ is
infinite. More precisely, $\pi_1(X)$ is exactly a discrete
torsion-free cocompact subgroup $\Pi$ of $PU(2,1)$ having minimal
Betti numbers and finite abelianization. By Mostow's
 rigidity theorem \cite{Mos}, such a ball quotient is strongly
rigid, i.e., $\Pi$ determines a fake projective plane up to
holomorphic or anti-holomorphic isomorphism. By \cite{KK}, no fake
projective plane can be anti-holomorphic to itself. Thus the
moduli space of fake projective planes consists of a finite number
of points, and the number is the double of the number of distinct
fundamental groups $\Pi$. By Hirzebruch's proportionality
principle \cite{Hir},
  $\Pi$ has covolume 1 in $PU(2,1)$. Furthermore, Klingler
  \cite{Kl} proved that the discrete torsion-free
cocompact subgroups of $PU(2,1)$ having minimal Betti numbers are
arithmetic (see also \cite{Ye}).

With these information, Prasad and Yeung \cite{PY} carried out a
classification of fundamental groups of fake projective planes.
They describe the algebraic group $\bar{G}(k)$ containing a
discrete torsion-free cocompact arithmetic subgroup $\Pi$ having
minimal Betti numbers and finite abelianization as follows. There
is a pair $(k,l)$ of number fields, $k$ is totally real, $l$ a
totally complex quadratic extension of $k$. There is a central
simple algebra $D$ of degree 3 with center $l$ and an involution
$\iota$ of the second kind on $D$ such that $k=l^{\iota}$. The
algebraic group $\bar{G}$ is defined over $k$ such that
$$\bar{G}(k)\cong\{z\in D| \iota(z)z=1\}/\{t\in
 l|\iota(t)t=1\}.$$
 There is one Archimedean place $\nu_0$ of $k$ so that
 $\bar{G}(k_{\nu_0})\cong PU(2,1)$ and $\bar{G}(k_{\nu})$ is
 compact for all other Archimedean places $\nu$.
 The data $(k,l, D, \nu_0)$ determines $\bar{G}$ up to $k$-isomorphism.
Using Prasad's volume formula \cite{P}, they were able to
eliminate most $(k,l, D,
 \nu_0)$, making a short list of possibilities where $\Pi$'s might
 occur, which yields a short list of maximal arithmetic subgroups $\bar{\Gamma}$ which might contain a
 $\Pi$. If $\Pi$ is contained, up to conjugacy, in a unique $\bar{\Gamma}$,
 then the group $\Pi$ or the fake projective plane ${\bf
B}/\Pi$ is said to belong to the {\it class} corresponding to the
conjugacy class of $\bar{\Gamma}$. If $\Pi$ is contained in two
non-conjugate maximal arithmetic subgroups, then $\Pi$  or ${\bf
B}/\Pi$ is said to form a {\it class} of its own.
 They exhibited 28 non-empty classes (\cite{PY},
Addendum). It turns out that the index of such a $\Pi$ in a
$\bar{\Gamma}$ is 1, 3, 9, or 21, and all such $\Pi$'s in the same
$\bar{\Gamma}$ have the same index.

Then Cartwright and Steger \cite{CS} have carried out a
computer-based but very complicated group-theoretic computation,
showing that there are exactly 28 non-empty classes, where 25 of
them correspond to conjugacy classes of maximal arithmetic
subgroups and each of the remaining 3 to a $\Pi$ contained in two
non-conjugate maximal arithmetic subgroups. This yields a complete
list of fundamental groups of fake projective planes: the moduli
space consists of exactly 100 points, corresponding to 50 pairs of
complex conjugate fake projective planes.

It is easy to see that the automorphism group $Aut(X)$ of a fake
projective plane $X$ can be given by
$$Aut(X)\cong N(\pi_1(X))/\pi_1(X),$$ where $N(\pi_1(X))$ is the
normalizer of $\pi_1(X)$ in a suitable $\bar{\Gamma}$.

\begin{theorem}\cite{PY},\cite{CS},\cite{CS2} For a fake
projective plane $X$,
$$Aut(X)=\{1\},\,\, C_3,\,\,
C_3^2,\,\, 7:3,$$ where $C_n$ denotes the cyclic group of order
$n$, and $7:3$ the unique non-abelian group of order $21$.
\end{theorem}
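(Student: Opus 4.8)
The plan is to read off $|\Aut(X)|$ from the identification $\Aut(X)\cong N(\Pi)/\Pi$ with $\Pi=\pi_1(X)$, and then to determine the isomorphism type of the resulting finite group. The first step is purely group-theoretic and cheap. Since $\Pi\subseteq N(\Pi)\subseteq\bar{\Gamma}$, the tower law gives that $|\Aut(X)|=[N(\Pi):\Pi]$ divides $[\bar{\Gamma}:\Pi]$, and by the Prasad--Yeung analysis this index is one of $1,3,9,21$. Hence $|\Aut(X)|\in\{1,3,7,9,21\}$, and the abstract candidates are exactly the trivial group, $C_3$, $C_7$, $C_9$, $C_3^2$, $C_{21}$, and $7:3$. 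The entire content of the theorem is therefore that the three groups $C_7$, $C_9$, and $C_{21}$ are never realized, and that among the order-$9$ possibilities only the elementary abelian $C_3^2$ occurs while among the order-$21$ possibilities only the non-abelian $7:3$ occurs.

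The decisive step is the explicit, class-by-class computation of the normalizer $N(\Pi)$ inside a maximal arithmetic $\bar{\Gamma}$. For each of the $28$ classes of Prasad--Yeung one has a presentation of $\bar{\Gamma}$ together with the lattice of intermediate subgroups between $\Pi$ and $\bar{\Gamma}$; from this one reads off $N(\Pi)/\Pi$ directly. This is exactly where the delicate, computer-assisted group theory of \cite{CS},\cite{CS2} enters, and it is the part I expect to be the main obstacle: neither the index bound above nor any soft invariant decides, for a given class, whether the full index is attained by the normalizer and, when it is, whether the quotient is abelian or not. This computation simultaneously produces the list of orders that actually occur, rules out $C_7$ and $C_{21}$ by exhibiting the order-$3$ element acting nontrivially on the order-$7$ element (forcing $7:3$), and settles the abelian case of order $9$ as $C_3^2$ rather than $C_9$.

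As a geometric cross-check, and to rule out order-$9$ elements intrinsically, I would invoke the holomorphic Lefschetz fixed-point formula. For a fake projective plane $q=p_g=0$, so $\chi(\mathcal{O}_X)=1$ and $H^i(X,\mathcal{O}_X)=0$ for $i>0$; thus for any nontrivial $g\in\Aut(X)$, which acts with only isolated fixed points, the formula collapses to
\[
1=\sum_{x\in X^{g}}\frac{1}{\det\!\bigl(1-dg_x\bigr)},
\]
where $dg_x$ has eigenvalues $\zeta^{a},\zeta^{b}$ with $\zeta$ a primitive root of unity of order equal to that of $g$. Writing each summand as $1/\bigl((1-\zeta^{a})(1-\zeta^{b})\bigr)$ yields rigid numerical constraints on the admissible eigenvalue data; these are incompatible with an element of order $9$, so $C_9$ cannot occur and every automorphism group of order $9$ must be $C_3^2$. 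Assembling the three ingredients---the index bound limiting $|\Aut(X)|$, the normalizer computation identifying the realized orders and fixing the non-abelian structure in the order-$21$ case, and the Lefschetz constraints excluding $C_9$ (and consistency-checking the exclusion of $C_7$ and $C_{21}$)---leaves precisely $\{1\}$, $C_3$, $C_3^2$, and $7:3$, as claimed.
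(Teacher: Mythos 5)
Your primary line of argument is in fact exactly the paper's: the paper gives no independent proof of this theorem but quotes it from \cite{PY}, \cite{CS}, \cite{CS2}, i.e., from the identification $\Aut(X)\cong N(\pi_1(X))/\pi_1(X)$, the Prasad--Yeung index restriction $[\bar{\Gamma}:\Pi]\in\{1,3,9,21\}$, and the Cartwright--Steger class-by-class normalizer computation, which you correctly single out as the decisive and irreplaceable step. Had you stopped there, this would be the same proof (by citation) as the paper's.

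However, your third ``ingredient''---that the holomorphic Lefschetz fixed point formula rules out order-$9$ automorphisms intrinsically---is a genuine error: the Lefschetz constraints are \emph{not} incompatible with an element of order $9$. Suppose $\sigma$ has order $9$ with isolated fixed points. Since $\sigma^*$ is the identity on $H^2(X,\mathbb{Q})=\mathbb{Q}K_X$, the topological formula forces exactly $3$ fixed points, and these must also be the $3$ fixed points of $\sigma^3$, each of local type $\frac{1}{3}(1,2)$ for $\sigma^3$. Now assign to the three points the local types $\frac{1}{9}(1,2)$, $\frac{1}{9}(7,8)$, $\frac{1}{9}(1,8)$ (all compatible with the cube condition). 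Put $\zeta=\exp(\frac{2\pi\sqrt{-1}}{9})$, $u=\zeta+\zeta^{8}$, $v=\zeta^{2}+\zeta^{7}$, $w=\zeta^{4}+\zeta^{5}$, so that $u+v+w=0$ and $uv=u-1$. Then
$$\frac{1}{(1-\zeta)(1-\zeta^{2})}+\frac{1}{(1-\zeta^{8})(1-\zeta^{7})}+\frac{1}{(1-\zeta)(1-\zeta^{8})}
=\frac{(1+w)+(2-v)}{(2-u)(2-v)}=\frac{3-u-2v}{3-u-2v}=1,$$
since $(2-u)(2-v)=4-2(u+v)+uv=3-u-2v$ and $3+w-v=3-u-2v$. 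So the holomorphic Lefschetz number comes out exactly $1=\chi(\mathcal{O}_X)$, as required; and because $1$ is rational, the formulas for all powers $\sigma^k$ with $\gcd(k,9)=1$ hold automatically by Galois conjugation, while $\sigma^3$, $\sigma^6$ reproduce the admissible order-$3$ data. Thus an order-$9$ automorphism passes every fixed-point-formula test, and the exclusion of $C_9$ genuinely rests on the arithmetic/group-theoretic classification (or would need a different geometric input, e.g., analysis of the would-be quotient with a $\frac{1}{9}$-singularity, which you do not attempt). The same caveat applies to your parenthetical claim that Lefschetz ``consistency-checks'' the exclusion of $C_7$ and $C_{21}$: $C_7$ is excluded as a \emph{full} automorphism group not because order-$7$ automorphisms fail to exist (they exist, inside $7:3$), but because the normalizer computation always produces an additional order-$3$ element---a statement entirely invisible to fixed-point formulas. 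Since your conclusion is nonetheless carried by the Cartwright--Steger computation alone, the theorem stands, but the Lefschetz paragraph should be deleted or downgraded to the (weaker) true statement of what it can verify.
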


 According to (\cite{CS},\cite{CS2}), 68 of the 100 fake projective planes admit a nontrivial group of
automorphisms.

Let $(X, G)$ be a pair of a fake projective plane $X$ and a
non-trivial group $G$ of automorphisms. In \cite{K08}, all
possible structures of the quotient surface $X/G$ and its minimal
resolution were classified:

\begin{theorem}\cite{K08}\label{k08}
\begin{enumerate}
\item If $G=C_3$, then $X/G$ is a ${\mathbb Q}$-homology
projective plane with $3$ singular points of type
$\frac{1}{3}(1,2)$ and its minimal resolution is a minimal surface
of general type with $p_g=0$ and $K^2=3$. \item If $G=C_3^2$, then
$X/G$ is a ${\mathbb Q}$-homology projective plane with $4$
singular points of type $\frac{1}{3}(1,2)$ and its minimal
resolution is a minimal surface of general type with $p_g=0$ and
$K^2=1$. \item If $G=C_7$, then $X/G$ is a ${\mathbb Q}$-homology
projective plane with $3$ singular points of type
$\frac{1}{7}(1,5)$ and its minimal resolution is a $(2,3)$-,
$(2,4)$-, or $(3,3)$-elliptic surface. \item If $G=7:3$, then
$X/G$ is a ${\mathbb Q}$-homology projective plane with $4$
singular points, 3 of type $\frac{1}{3}(1,2)$ and one of type
$\frac{1}{7}(1,5)$, and its minimal resolution is a $(2,3)$-,
$(2,4)$-, or $(3,3)$-elliptic surface.
\end{enumerate}
\end{theorem}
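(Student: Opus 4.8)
The plan is to treat each group $G\in\{C_3,C_3^2,C_7,7{:}3\}$ by analysing the fixed loci of its non-trivial elements on $X$, reading off the quotient singularities, and then computing the invariants of $X/G$ and of its minimal resolution.

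First I would record the two Lefschetz computations. Every $g\in\Aut(X)$ preserves the ample class $K_X$, which spans $H^2(X,\bbQ)\cong\bbQ$, so $g^*=\mathrm{id}$ on $H^2$; together with $b_1=b_3=0$ this gives $H^*(X/G,\bbQ)=H^*(X,\bbQ)^G=H^*(\bbP^2,\bbQ)$, so $X/G$ is a $\bbQ$-homology projective plane for every $G$. For a non-trivial $g$ of prime order $p\in\{3,7\}$ the topological Lefschetz number is $1+1+1=3$, so $\chi(\mathrm{Fix}(g))=3$. The first point requiring care is that $\mathrm{Fix}(g)$ is finite: I would combine this with the holomorphic (Atiyah--Bott) fixed-point formula, whose value is $\sum_i(-1)^i\operatorname{tr}(g^*\mid H^i(\calO_X))=1$, and rule out a fixed curve (necessarily a ball-quotient curve of genus $\ge2$ with a prescribed normal weight) by showing its contribution cannot balance the two identities. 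Granting finiteness, there are exactly three fixed points, with non-zero local weights $(a,b)\bmod p$.

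Next I would pin down the local types. For $p=3$, writing the holomorphic-Lefschetz identity for $g$ and for $g^2$ and subtracting shows the only configuration closing up to $1$ for both is three points of type $\tfrac13(1,2)$; for $p=7$ the analogous identities for all powers $g^k$ single out $\tfrac17(1,5)$. Orbit bookkeeping then gives the singular points: for $C_3$, three $\tfrac13(1,2)$; for $C_3^2$, counting the $8$ non-trivial elements (each with three type-$(1,2)$ fixed points) and using that a point with full stabiliser $C_3^2$ has a \emph{smooth} quotient yields no common fixed point and exactly four orbits, hence four $\tfrac13(1,2)$; for $C_7$, three $\tfrac17(1,5)$; and for $7{:}3$ the normaliser action fuses the $C_7$-fixed points into one $\tfrac17(1,5)$ point while producing three $\tfrac13(1,2)$ points. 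Writing $f\colon Y\to X/G$ for the minimal resolution, rationality of quotient singularities gives $h^i(\calO_Y)=h^i(\calO_X)^G$, so $p_g(Y)=q(Y)=0$ and $\chi(\calO_Y)=1$; and since $\pi\colon X\to X/G$ is étale in codimension one, $K_X=\pi^*K_{X/G}$ and $K_{X/G}^2=K_X^2/|G|$. For $G=C_3,C_3^2$ the singularities $\tfrac13(1,2)=A_2$ are canonical, so $K_Y=f^*K_{X/G}$ is nef and big with $K_Y^2=3,1$, and $Y$ is minimal of general type with $p_g=0$, as claimed. For $G=C_7,7{:}3$ the singularity $\tfrac17(1,5)$ has Hirzebruch--Jung string $[2,2,3]$ and discrepancies $-\tfrac17,-\tfrac27,-\tfrac37$, each contributing $-\tfrac37$ to $K^2$, so $K_Y^2=K_{X/G}^2-(\#\,\tfrac17\text{-points})\cdot\tfrac37=0$, and Noether's formula forces $e(Y)=12$.

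The remaining work, for $G=C_7$ and $7{:}3$, is to show $Y$ is a relatively minimal properly elliptic surface with the stated multiple fibres, and I expect the main obstacle to be proving that $Y$ is a \emph{minimal} surface, i.e.\ has no $(-1)$-curve. Once that is known the classification follows immediately: $K_Y^2=0$ is incompatible with general type (minimal surfaces of general type have $K^2\ge1$), so $\kappa(Y)\le1$; the cases $\kappa=-\infty$ are excluded by $q=0$ and $K_Y^2=0$ (which forbid $\bbP^2$ and the ruled surfaces over a positive-genus base); and $\kappa=0$ is excluded because $K_Y\cdot f^*K_{X/G}=K_{X/G}^2>0$ shows $K_Y\not\equiv0$, whereas a minimal surface of Kodaira dimension $0$ has numerically trivial canonical class. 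Hence $\kappa(Y)=1$. To exclude $(-1)$-curves I would use the cover: such a curve maps to a rational curve $\bar E\subset X/G$, and pulling back along $\pi$ and applying Riemann--Hurwitz over $\bar E\cong\bbP^1$ (with ramification concentrated over the quotient singular points, of orders in $\{3,7\}$) produces a curve on $X$ of small genus lying in a small multiple of the primitive polarisation $H$ (where $K_X=3H$) and passing through the fixed points with prescribed tangency; the rank-one Picard group of $X$ together with Riemann--Roch shows no such curve exists. Finally, for a minimal properly elliptic $Y$ with $q=0$ the base of the fibration $\psi\colon Y\to B$ is $\bbP^1$, and the canonical bundle formula $K_Y\equiv\bigl(-1+\sum_i(1-\tfrac1{m_i})\bigr)f$ with $\deg\mathfrak d=\chi(\calO_Y)=1$ forces exactly two multiple fibres satisfying $\tfrac1{m_1}+\tfrac1{m_2}<1$; matching the induced torsion $H_1(Y,\bbZ)\cong\bbZ/\gcd(m_1,m_2)$ against the abelianisation data of $\Pi$ then narrows the multiplicities to $(2,3)$, $(2,4)$, or $(3,3)$.
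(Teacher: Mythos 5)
Your overall strategy (Lefschetz fixed-point bookkeeping, orbit counting, then surface classification of the minimal resolution) is exactly the strategy of \cite{K08}, where this theorem is actually proved---the present paper only quotes it, though it reproduces the Lefschetz step as Proposition \ref{lf}. Your cases (1) and (2) are essentially correct: for $p=3$ the holomorphic Lefschetz identity really does force three fixed points of type $\tfrac{1}{3}(1,2)$, the $C_3^2$ orbit count gives four cusps, and since $A_2$ singularities are canonical the resolution is minimal of general type with $K^2=3$, resp.\ $1$. The genuine gap is in the order-$7$ case: the claim that ``the analogous identities for all powers $g^k$ single out $\tfrac{1}{7}(1,5)$'' is false. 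The paper's own Proposition \ref{lf}(1) records precisely what Lefschetz gives: \emph{either} three points of type $\tfrac{1}{7}(1,5)$, \emph{or} two points of type $\tfrac{1}{7}(1,2)$ and one of type $\tfrac{1}{7}(1,6)$. The second configuration is consistent with the holomorphic formula for every power of $g$: assign tangent weights $\tfrac{1}{7}(1,2)$, $\tfrac{1}{7}(6,5)$ and $\tfrac{1}{7}(1,6)$; then with $\zeta=\exp(2\pi\sqrt{-1}/7)$ one checks
$$\frac{1}{(1-\zeta)(1-\zeta^2)}+\frac{1}{(1-\zeta^6)(1-\zeta^5)}+\frac{1}{(1-\zeta)(1-\zeta^6)}=1,$$
and the identities for the other powers hold as well (the contributions are permuted or conjugated). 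So no amount of Lefschetz computation eliminates this case; ruling it out is a substantial part of \cite{K08}, and it proceeds by analysing the hypothetical quotient surface itself---its minimal resolution would have $K^2=-1$ and $e=13$, since each $\tfrac{1}{7}(1,2)$ point has resolution string $[4,2]$ and contributes $-8/7$ to $K^2$---and deriving a contradiction from the classification of surfaces. This gap propagates into your case (4), whose singularity list depends on the same dichotomy.

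A second gap is the last step of cases (3)--(4). The canonical bundle formula with $\chi(\mathcal{O})=1$ does \emph{not} force exactly two multiple fibres: three multiple fibres of multiplicities $(2,2,2)$ give $K\equiv\tfrac{1}{2}F$, perfectly consistent with $\kappa=1$, $q=0$. What actually pins down the multiplicities in \cite{K08} is geometric: the $(-3)$-curves arising from the $\tfrac{1}{7}(1,5)$ resolutions satisfy $K\cdot E=1$, so writing $K\equiv\lambda F$ one needs $\lambda=1/d$ with $d=E\cdot F$ a positive integer and $m_i\mid d$ for every multiple fibre (this is what kills, e.g., $(2,5)$, $(3,4)$ and $(2,6)$), combined with an analysis of which fibre configurations can contain the $(-2)$-curves within Euler number $12$. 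Your substitute---matching $H_1$ against the abelianisation data of $\Pi$---is both anachronistic (that is the Cartwright--Steger computation \cite{CS}, \cite{CS2}, quoted in Remark 0.3 of this paper, which postdates \cite{K08}) and at odds with the statement being proved: that data eliminates $(3,3)$ altogether, whereas the theorem retains it, so the intended proof cannot rest on it; moreover Dolgachev's formula $\pi_1=C_{\gcd(m_1,m_2)}$ only applies once you already know there are exactly two multiple fibres. Finally, your minimality step (excluding $(-1)$-curves by pulling back to $X$) is only a sketch, and it is another place where \cite{K08} does real work, since $K_{\tilde{Y}}$ is not nef a priori when the singularities are not canonical.
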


Here a ${\mathbb Q}$-homology projective plane is a normal
projective surface with the same Betti numbers as $\bbP^2$. A fake
projective plane is a nonsingular ${\mathbb Q}$-homology
projective plane, hence every quotient is again a ${\mathbb
Q}$-homology projective plane. An $(a,b)$-elliptic surface is a
relatively minimal elliptic surface over $\bbP^1$ with two
multiple fibres of multiplicity $a$ and $b$ respectively. It has
Kodaira dimension 1 if and only if $a\ge 2, b\ge 2, a+b\ge 5$. It
is an Enriques surface iff $a=b=2$, and it is rational iff $a=1$
or $b=1$. An $(a,b)$-elliptic surface has $p_g=q=0$, and by
\cite{D} its fundamental group is the cyclic group of order the
greatest common divisor of $a$ and $b$.

\begin{remark} (1) Since $X/G$ has rational singularities only, $X/G$
and its minimal resolution have the same fundamental group. Let
$\bar{\Gamma}$ be the maximal arithmetic subgroup of $PU(2,1)$
containing $\pi_1(X)$. There is a subgroup $\tilde{G}\subset
\bar{\Gamma}$ such that $\pi_1(X)$ is normal in $\tilde{G}$ and
$G=\tilde{G}/\pi_1(X).$  Thus, $$X/G\cong {\bf B}/\tilde{G}.$$ It
is well known (cf. \cite{Arm}) that
$$\pi_1({\bf B}/\tilde{G})\cong \tilde{G}/H,$$
where $H$ is the minimal normal subgroup of $\tilde{G}$ containing
all elements acting non-freely on the 2-ball $\bf B$. In our
situation, it can be shown that $H$ is generated by torsion
elements of $\tilde{G}$, and Cartwright and Steger
 have computed, along with their computation of the
fundamental groups, the quotient group $\tilde{G}/H$ for each pair
$(X, G)$.
\begin{itemize}
\item \cite{CS} If $G=C_3$, then $$\pi_1(X/G)\cong \{1\},\, C_2,\, C_3,\, C_4,\, C_6,\, C_7,\, C_{13},\, C_{14},\,
C_2^2,\, C_2\times C_4,\, S_3,\, D_8\,\,{\rm or}\,\, Q_8,$$ where
$S_3$ is the symmetric group of order 6, and $D_8$ and $Q_8$ are
the dihedral and quaternion groups of order 8. \item \cite{CS2} If
$G=C_3^2$ or $C_7$ or $7:3$, then
$$\pi_1(X/G)\cong \{1\}\,\,{\rm or}\,\, C_2.$$
 This eliminates the possibility of $(3,3)$-elliptic surfaces in
Theorem \ref{k08}, as $(3,3)$-elliptic surfaces have $\pi_1=C_3$.
\end{itemize}

\medskip\noindent
(2) It is interesting to consider all ball quotients which are
covered irregularly by a fake projective plane. Indeed, Cartwright
and Steger have considered all subgroups $\tilde{G}\subset
PU(2,1)$ such that $\pi_1(X)\subset \tilde{G}\subset\bar{\Gamma}$
for some maximal arithmetic subgroup $\bar{\Gamma}$ and some fake
projective plane $X$, where $\pi_1(X)$ is not necessarily normal
in $\tilde{G}$. It turns out \cite{CS2} that, if $\pi_1(X)$ is not
normal in $\tilde{G}$, then there is another fake projective plane
$X'$ such that $\pi_1(X')$ is normal in $\tilde{G}$, hence ${\bf
B}/\tilde{G}\cong X'/G'$ where $G'=\tilde{G}/\pi_1(X')$. Thus such
a general subgroup $\tilde{G}$ does not produce a new surface.
\end{remark}

\bigskip
 It is a major step
toward a geometric construction of a fake projective plane to
construct a ${\mathbb Q}$-homology projective plane satisfying one
of the descriptions (1)-(4) from Theorem \ref{k08}. Suppose that
one has such a ${\mathbb Q}$-homology projective plane. Then, can
one construct a fake projective plane by taking a suitable cover?
In other words, does the description (1)-(4) from Theorem
\ref{k08} characterize the quotients of fake projective planes?
The answer is affirmative in all cases.

\begin{theorem}\label{main1} Let $Z$ be a ${\mathbb Q}$-homology
projective plane satisfying one of the descriptions $(1)$-$(4)$
from Theorem \ref{k08}.
\begin{enumerate}
\item If $Z$ is a ${\mathbb Q}$-homology
projective plane with $3$ singular points of type
$\frac{1}{3}(1,2)$ and its minimal resolution is a minimal surface
of general type with $p_g=0$ and $K^2=3$,  then there is a
$C_3$-cover $X\to Z$ branched at the three singular points of $Z$
such that $X$ is a fake projective plane.
\item If $Z$ is a ${\mathbb Q}$-homology projective plane with $4$
singular points of type $\frac{1}{3}(1,2)$ and its minimal
resolution is a minimal surface of general type with $p_g=0$ and
$K^2=1$, then there is a $C_3$-cover $Y\to Z$ branched at three of
the four singular points of $Z$ and a $C_3$-cover $X\to Y$
branched at the three singular points on $Y$, the pre-image of the
remaining singularity on $Z$, such that $X$ is a fake projective
plane.
\item If $Z$ is a ${\mathbb Q}$-homology
projective plane with $3$ singular points of type
$\frac{1}{7}(1,5)$ and its minimal resolution is a $(2,3)$- or
$(2,4)$-elliptic surface, then there is a $C_7$-cover $X\to Z$
branched at the three singular points of $Z$ such that $X$ is a
fake projective plane.
 \item If $Z$ is a ${\mathbb Q}$-homology projective plane with $4$
singular points, 3 of type $\frac{1}{3}(1,2)$ and one of type
$\frac{1}{7}(1,5)$, and its minimal resolution is a $(2,3)$- or
$(2,4)$-elliptic surface, then there is a $C_3$-cover $Y\to Z$
 branched at the three singular points of type $\frac{1}{3}(1,2)$ and a $C_7$-cover $X\to
Y$ branched at the three singular points, the pre-image of the
singularity on $Z$ of type $\frac{1}{7}(1,5)$, such that $X$ is a
fake projective plane.
\end{enumerate}
\end{theorem}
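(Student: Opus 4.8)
The plan is to handle the four cases uniformly. Equip $Z$ with the orbifold structure $\hat Z$ carrying a local group $C_n$ ($n=3$ or $7$) at each cyclic quotient singularity $\frac1n(1,q)$, let $\rho\colon W\to Z$ be the minimal resolution, and build the asserted cover as the one that uniformizes each of these cone points. The organizing computation is numerical: writing $\rho^{\ast}K_{\hat Z}=K_W+\sum c_iE_i$ with $c_i\ge 0$ the negatives of the discrepancies, one forms $\bar c_1^2:=(\rho^{\ast}K_{\hat Z})^2$ and $\bar c_2:=e(Z)-\sum_p(1-\tfrac1{n_p})$, and in every case these satisfy the Miyaoka--Yau \emph{equality} $\bar c_1^2=3\,\bar c_2$. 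For instance in (1) the $\frac13(1,2)$ points are Du Val, so $c_i=0$, $\bar c_1^2=K_W^2=3$ and $\bar c_2=3-3\cdot\tfrac23=1$; in (3) the discrepancies of $\frac17(1,5)$ are $-\tfrac17,-\tfrac27,-\tfrac37$ and one computes $\bar c_1^2=9/7=3\bar c_2$. This equality is the linchpin and forces any cover uniformizing all cone points to satisfy $c_1^2=3c_2=9$.

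Next I would produce the cover. Each $\frac1n(1,q)$ is a cyclic quotient singularity with local class group $\bbZ/n$, whose generator corresponds to the cover $\bbC^2\to\bbC^2/C_n$ --- étale over the punctured neighborhood, smooth and totally ramified over the point. To globalize, I seek a Weil divisor class $\delta\in\Cl(Z)$ with $n\delta\sim 0$ in $\Pic(Z)$ restricting to a generator of each local class group, and set $X=\operatorname{Spec}\bigoplus_{i=0}^{n-1}\mathcal O_Z(-i\delta)$ with reflexive summands. A local computation identifies $\pi\colon X\to Z$ near each singular point with $\bbC^2\to\bbC^2/C_n$, so $X$ is smooth, and the exact order of $\delta$ makes $X$ connected. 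By construction $\pi$ is étale in codimension one and compatible with the cone structure, so $K_X=\pi^{\ast}K_{\hat Z}$. Since $Z$ has Picard number one and $K_{\hat Z}$ is $\bbQ$-effective (as $W$ has non-negative Kodaira dimension) with $K_{\hat Z}^2=\bar c_1^2>0$, $K_{\hat Z}$ is ample; hence $K_X$ is ample, $X$ is of general type, and $c_1^2(X)=\deg\pi\cdot\bar c_1^2=9$, $c_2(X)=\deg\pi\cdot\bar c_2=3$.

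Thus $X$ is a smooth surface of general type with $c_1^2(X)=3c_2(X)=9$; since $\chi(\mathcal O_X)=\tfrac1{12}(c_1^2+c_2)=1$ we have $p_g(X)=q(X)$, and it remains only to prove $q(X)=0$ to conclude that $X$ has the Betti numbers of $\bbP^2$ and is therefore a fake projective plane. For this I would invoke Aubin--Yau (\cite{Aubin},\cite{Yau}) exactly as in the opening discussion: the equality $c_1^2=3c_2$ makes $X$ a compact ball quotient, so its cotangent bundle $\Omega^1_X$ is ample; consequently $X$ carries no fibration over a curve (a fiber would give a trivial sub-line-bundle of the ample $\Omega^1_X$) and no elliptic or rational curves, which together with the smallness $c_2(X)=3$ forces the Albanese map to be constant, i.e. $q(X)=0$. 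For the iterated statements (2) and (4) I would apply the construction twice: first build the intermediate $C_3$-cover $Y\to Z$, check that $Y$ is again a $\bbQ$-homology projective plane with singularities of the stated type (so the numerology of Paragraph 1 holds for $Y$), and then run the $C_3$- respectively $C_7$-construction on $Y$.

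The main obstacle is the existence of the covering datum $\delta$: one must lift the diagonal generator of $\bigoplus_p\Cl(\mathcal O_{Z,p})\cong(\bbZ/n)^{\oplus\#\mathrm{Sing}}$ to an $n$-torsion class of $\Cl(Z)$, equivalently show that the exceptional configuration divisor is suitably divisible in $\Pic(W)$. This is not formal; it must be extracted from the hypothesis that $Z$ is a $\bbQ$-homology projective plane together with the intersection matrices of the exceptional chains, which control the torsion of $H^2(Z,\bbZ)$. A secondary delicate point is the vanishing $q(X)=0$ when $q\ge 2$ is a priori allowed; the Albanese argument above must be made precise there. Should either step resist a direct treatment, the fallback is to apply the orbifold form of \cite{Yau} to the ample, Miyaoka--Yau extremal orbifold $\hat Z$, realizing $Z=\mathbf B/\tilde\Gamma$ for a cocompact lattice $\tilde\Gamma\subset PU(2,1)$ whose torsion consists exactly of the local cone groups, and to read off both $\delta$ and the required vanishing from a surjection $\tilde\Gamma\to C_n$ injective on each of them, in the spirit of \cite{PY} and \cite{CS}.
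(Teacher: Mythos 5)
Your opening bookkeeping (the orbifold Miyaoka--Yau equality and its propagation through covers \'etale in codimension one) is correct and consistent with the paper's computations of $K^2$ and $e$, but the proposal has two genuine gaps, and they sit exactly where the paper does its real work. The first is the vanishing $q(X)=0$. Your argument --- $X$ is a ball quotient, so $\Omega^1_X$ is ample, so there is no fibration over a curve because ``a fiber would give a trivial sub-line-bundle of the ample $\Omega^1_X$,'' so the Albanese map is constant --- fails at the quoted step: ampleness forces positivity of \emph{quotients}, not of sub-bundles. On a smooth fiber $F$ one gets an exact sequence $0\to \mathcal{O}_F\to \Omega^1_X|_F\to \omega_F\to 0$, and a non-split extension of this shape with $\deg\omega_F>0$ can perfectly well be ample; in fact compact two-ball quotient surfaces can admit fibrations over curves (Livn\'e's examples), so no purely intrinsic property of $X$ can be invoked here. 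What $c_2(X)=3$ rules out is only a base curve of genus $\ge 2$; combined with Castelnuovo--de Franchis and Hacon--Pardini this gives exactly $p_g(X)=q(X)\le 2$ (the paragraph before Proposition \ref{lf}), and the remaining cases $q=1$ (Albanese fibration over an elliptic curve) and $q=2$ (Albanese image a surface) are untouched by your argument. The paper eliminates them only by exploiting the covering automorphism $\sigma$ that your uniformization viewpoint discards: the topological and holomorphic Lefschetz fixed point formulas (Proposition \ref{lf}) kill $q=1$, and the Picard-number comparison of $Z$ with $Alb(X)/\langle\sigma\rangle$ (Propositions \ref{ab} and \ref{p=q=2}) kills $q=2$. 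The same issue recurs for the intermediate surface $Y$ in cases (2) and (4) (Lemmas \ref{4A2} and \ref{21to7}), so one cannot simply assert that ``the numerology of Paragraph 1 holds for $Y$.''

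The second gap is the existence of the covering datum $\delta$, which you correctly identify as the ``main obstacle'' but then defer. This is not a deferrable technicality: it is the content of Lemmas \ref{3A2} and \ref{2covers} and of the corresponding arguments in the order-$7$ cases. Moreover, your formulation asks for more than the lattice argument delivers: the discriminant-group computation on $H^2(\tilde{Z},\bbZ)_{free}$ only produces a class $L$ with $3L=B+\tau$ where $\tau$ is a possibly nonzero \emph{torsion} class (equivalently, $n\delta$ is torsion in $\Cl(Z)$, not necessarily zero), and when $\tau\neq 0$ the paper must pass to the \'etale cover $p:V\to\tilde{Z}$ defined by $\tau$, lift the deck transformation $g$ to the $C_3$-cover $V'\to V$ defined by $p^*L$, and take $V'/\langle g\rangle$; for the second cover in cases (2) and (4) the covering data must additionally be pushed forward through the first cover. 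Your fallback --- realize $Z={\bf B}/\tilde{\Gamma}$ by orbifold Yau and ``read off'' a surjection $\tilde{\Gamma}\to C_n$ injective on the local cone groups --- is a restatement of the problem rather than a solution: such a surjection is equivalent to the cover whose existence is in question, and producing it is exactly as hard; nor would it yield $q(X)=0$. As it stands, the proposal reduces the theorem to its two hardest steps without proving either.
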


In the case $(4)$, we give a detailed information on the types of
singular fibres of the elliptic fibration on the minimal
resolution of $Z$.

\begin{theorem}\label{main2}
Let $Z$ be a ${\mathbb Q}$-homology projective plane with $4$
singular points, 3 of type $\frac{1}{3}(1,2)$ and one of type
$\frac{1}{7}(1,5)$. Assume that its minimal resolution $\tilde{Z}$
is a $(2,3)$-elliptic surface.
 Then the following hold true.
\begin{enumerate}
\item The triple cover $Y$ of $Z$ branched at the three singular
points of type $\frac{1}{3}(1,2)$ is a ${\mathbb Q}$-homology
projective plane with $3$ singular points of type
$\frac{1}{7}(1,5)$. The minimal resolution $\tilde{Y}$ of $Y$ is a
$(2,3)$-elliptic surface, and every fibre of the elliptic
fibration on $\tilde{Z}$ does not split in $\tilde{Y}$. \item The
elliptic fibration on $\tilde{Z}$ has $4$ singular fibres of type
$\mu_1I_3+ \mu_2I_3+\mu_3I_3+\mu_4I_3$, where $\mu_i$ is the
multiplicity of the fibre.
\item The elliptic fibration on $\tilde{Y}$ has $4$ singular
fibres of type $\mu I_9+ \mu_1I_1+\mu_2I_1+ \mu_3I_1$.
\end{enumerate}
\end{theorem}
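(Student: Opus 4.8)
\emph{Overview.} The plan is to extract everything from the elliptic fibration on $\tilde Z$ together with the $C_3$-action arising from $Z=Y/C_3$, so I first fix invariants. Since $\tilde Z$ is a $(2,3)$-elliptic surface it has $p_g=q=0$, hence $\chi(\mathcal O)=1$, $e(\tilde Z)=12\chi=12$, and (as $h^{2,0}=0$) $\rho(\tilde Z)=b_2=10$. The exceptional locus of $\tilde Z\to Z$ is, by Hirzebruch--Jung, a chain of two $(-2)$-curves over each $\frac{1}{3}(1,2)$ point and a chain $(-3)$--$(-2)$--$(-2)$ over the $\frac{1}{7}(1,5)$ point. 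Adjunction gives $K\cdot C=0$ for every $(-2)$-curve and $K\cdot E=1$ for the $(-3)$-curve $E$; since $K\equiv\frac{1}{6}F$ numerically, the eight $(-2)$-curves are vertical while $E$ is a horizontal $6$-section. Thus the $(-2)$-curves sit inside reducible fibres as four pairwise disjoint $A_2$-chains (three from the $\frac{1}{3}(1,2)$ points and one from the $(-2)$--$(-2)$ tail of the $\frac{1}{7}(1,5)$ chain).

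\emph{Part (2).} I would compute the rank of the vertical sublattice $V$ spanned by all fibre components. The eight $(-2)$-curves form four disjoint negative-definite $A_2$ blocks, so together with $F$ they are independent and $\rank V\ge 9$; since $V\subseteq F^{\perp}$ one has $\rank V\le 9$, whence $\rank V=9$ and $\sum_v(m_v-1)=8$ by Shioda--Tate, where $m_v$ is the number of components of the fibre over $v$. Each of the four disjoint $A_2$-chains forces its fibre to be reducible with $m_v\ge 3$, and since chains from different singular points are disjoint, no two can share a multiplicative fibre without pushing $\sum(m_v-1)$ past $8$; likewise the only additive fibres containing a transversal $A_2$-subchain are $I_0^{*}$ and larger, each contributing at least $4$ to that sum, which overshoots. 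Combining $\sum(m_v-1)=8$ with $e(\tilde Z)=\sum_v e_v=12$ (using $e_v=m_v$ for $I_n$ and $e_v=m_v+1$ for additive fibres) should then force exactly four $I_3$ fibres and no other singular fibre; the values $\mu_i$ are just whatever multiplicities the $(2,3)$ structure assigns, and need not be pinned down. This fibre-type bookkeeping is the technical heart of the argument and the step most likely to need care.

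\emph{Part (1).} I would invoke Theorem \ref{main1}(4) to produce the tower $X\to Y\to Z$ with $X$ a fake projective plane, $Y=X/C_7$, and $Y\to Z$ the $C_3$-cover branched at the three $\frac{1}{3}(1,2)$ points. Locally this cover is $\mathbb C^2\to\mathbb C^2/\mu_3$ over those points and étale over the $\frac{1}{7}(1,5)$ point, so $Y$ is smooth there and carries exactly three $\frac{1}{7}(1,5)$ points; Theorem \ref{k08}(3) then makes $\tilde Y$ a $(2,3)$-, $(2,4)$- or $(3,3)$-elliptic surface, the last excluded by the Remark since $\pi_1(X/C_7)\in\{1,C_2\}$. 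To settle the splitting and the multiplicities, consider the $C_3$-action on $\tilde Y$ with $Z=Y/C_3$; its fixed locus is the three isolated points over the $\frac{1}{3}(1,2)$ points, so $q\colon\tilde Y\to\tilde Y/C_3$ is étale in codimension one and $\tilde Z\to\tilde Y/C_3$ is the minimal resolution. If the induced map on bases had degree $3$, at least two of the four $I_3$ fibres of $\tilde Z$ would lie over unramified points and pull back to three $I_3$ fibres each, forcing $e(\tilde Y)\ge 18$ and contradicting $e(\tilde Y)=12$; hence the base map has degree $1$, i.e.\ the fibres do not split. As $q$ is étale in codimension one and fibre-preserving over the common base, fibre multiplicities are preserved, so the multiplicity-$2$ and $3$ fibres of $\tilde Z$ transfer to $\tilde Y$ and $\tilde Y$ is a $(2,3)$-elliptic surface.

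\emph{Part (3).} Finally I would transport the four $I_3$ fibres of $\tilde Z$ through $q$. Away from the three fixed points $q$ is étale of degree $3$ and fibre-preserving, so the singular fibres of $\tilde Y$ lie over the same four base points. Over the point whose $I_3$ fibre meets no fixed point, the fibre of $\tilde Y$ is the connected étale triple cover of the $3$-cycle $I_3$, which is $I_9$; over each of the remaining three points the fixed point of $C_3$ must be the node at which two branches are interchanged by the local $\frac{1}{3}(1,2)$, so that fibre is irreducible with one node, namely $I_1$. The constraint $e(\tilde Y)=12=9+1+1+1$ pins the types as $I_9+3I_1$, and preservation of multiplicities under $q$ matches them with the multiplicities on $\tilde Z$.
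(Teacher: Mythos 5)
Your part (2) is where the proposal breaks down, and it is exactly the step you flagged as needing care. The two numerical constraints you derive are correct: $\rho(\tilde Z)=10$ gives $\sum_v(m_v-1)=8$ for the vertical lattice, and $e(\tilde Z)=\sum_v e_v=12$. But they do not force four $I_3$ fibres, because your claim that ``the only additive fibres containing a transversal $A_2$-subchain are $I_0^*$ and larger, each contributing at least $4$, which overshoots'' is false on two counts. First, a type $IV$ fibre has exactly three components, any two of which form an $A_2$ configuration. Second, and fatally, a single $IV^*$ fibre contains three pairwise disjoint $A_2$ chains (its three arms), so one additive fibre can absorb three of the four chains at once, contributing $7-1=6$ to $\sum_v(m_v-1)$; the fourth chain then sits in a fibre with three components contributing $2$, giving exactly $8$ with no overshoot. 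Concretely, both configurations $IV^*+\mu I_3+\mu' I_1$ (with $\sum(m_v-1)=6+2+0=8$ and $e=8+3+1=12$) and $IV^*+IV$ (with $\sum(m_v-1)=6+2=8$ and $e=8+4=12$) satisfy every constraint in your argument. These are precisely the cases (a) and (b) that the paper must eliminate, and it cannot do so by counting: it uses the $C_3$-cover $\tilde Y\to Z'$ together with non-splitting of fibres. In case (a), since the $(-3)$-curve is a $6$-section it must meet the central component of $IV^*$ with multiplicity $2$, so all three cusps lie on the image of the $IV^*$ fibre; the $\mu I_3$- and $\mu' I_1$-fibres are then disjoint from the branch locus and, being non-split, lift to an $I_9$ and an $I_3$ in $\tilde Y$, forcing $\rho(\tilde Y)\ge 12>10$. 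In case (b), the $IV$ fibre is disjoint from the branch locus and would need a connected unramified triple cover, which is impossible since a $IV$ fibre is simply connected.

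This gap propagates through the rest of your proposal, because your logical order is (2), then (1), then (3): your proof of non-splitting in part (1) (``at least two of the four $I_3$ fibres of $\tilde Z$ would lie over unramified points\dots'') presupposes part (2). The paper's order is the reverse, and this is essential. Non-splitting is proved first, inside Lemma \ref{21to7}, by contradiction: if a general fibre split, each of the three $(-3)$-curves $E_i$ in $\tilde Y$ would be a $2$-section, forcing the multiple fibres of $Z'$ not to split, so the induced base map $\gamma\colon\mathbb P^1\to\mathbb P^1$ would be a cyclic triple cover branched exactly at the two base points of the multiple fibres; but in each of the cases (a), (b), (c) some further fibre whose image meets the branch locus of $\tilde Y\to Z'$ cannot split, producing at least a third branch point of $\gamma$. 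Only then, with non-splitting in hand, are cases (a) and (b) killed as above. To repair your proposal you would need an argument of this kind; the lattice and Euler bookkeeping alone cannot finish part (2). Two smaller points: an order-$3$ action cannot ``interchange'' the two branches of a node --- it preserves each branch, the branches being the two eigendirections of the local $\frac{1}{3}(1,2)$ action; and your appeal to Theorem \ref{main1}(4) is legitimate since that theorem is proved before this one, though you should be aware that the paper's proof of it (Lemma \ref{21to7}) already contains the content of part (1).
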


The case where $\tilde{Z}$ is a $(2,4)$-elliptic surface was
treated in \cite{K10}. The assertions (2) and (3) of Theorem
\ref{main2} were given without proof in Corollary 4.12 and 1.4 of
\cite{K08}.

As a consequence of Theorem \ref{main1} and the result of
Cartwright and Steger (\cite{CS}, \cite{CS2}), we give a
classification of ${\mathbb Q}$-homology projective planes with
cusps, i.e., singularities of type $\frac{1}{3}(1,2)$, only.

\begin{theorem}\label{main3}
Let $Z$ be a ${\mathbb Q}$-homology projective plane with cusps
only.
 Then $Z$ is isomorphic to one of the following:
\begin{enumerate}
\item $X/C_3$, where $X$ is a fake projective plane with an order 3
automorphism;
\item $X/C_3^2$, where $X$ is a fake projective plane
with $Aut(X)=C_3^2$;
\item $\mathbb{P}^2/\langle\sigma\rangle$, where $\sigma$ is the order 3 automorphism given by
$$\sigma(x,y,z)=(x,\omega y,\omega^2 z);$$
\item $\mathbb{P}^2/\langle\sigma, \tau\rangle$, where $\sigma$
and $\tau$ are the commuting order 3 automorphisms given by
$$\sigma(x,y,z)=(x,\omega y,\omega^2 z), \quad \tau(x,y,z)=(z, ax,
a^{-1}y),$$ where $a$ is a non-zero constant and
$\omega=exp(\frac{2\pi\sqrt{-1}}{3}).$
\end{enumerate}
\end{theorem}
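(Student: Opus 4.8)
The plan is to reduce everything to the numerical invariants of the minimal resolution and to the sign of $K_Z$, handling the case $K_Z$ ample by Theorem \ref{main1} and the case $-K_Z$ ample by the classification of del Pezzo surfaces of Picard number one. Assume $Z$ is singular, so it has $k\ge 1$ cusps, each a rational double point of type $A_2$, and let $\pi\colon\tilde Z\to Z$ be the minimal resolution. Since $A_2$ points are canonical, $\pi$ is crepant: $K_{\tilde Z}=\pi^\ast K_Z$, so $K_{\tilde Z}^2=K_Z^2$ and $\kappa(\tilde Z)=\kappa(Z)$. From $b_1(Z)=0$ one gets $q(\tilde Z)=0$, and since the $2k$ exceptional $(-2)$-curves span a negative definite sublattice of $\NS(\tilde Z)$ of rank $2k$ to which an ample class adds one further independent class, $\rho(\tilde Z)\ge 2k+1=b_2(\tilde Z)$; as $\rho\le h^{1,1}$ and $b_2=2p_g+h^{1,1}$, this forces $p_g(\tilde Z)=0$. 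Hence $\chi(\calO_{\tilde Z})=1$ and $e(\tilde Z)=3+2k$, so Noether's formula gives $K_Z^2=9-2k$. Because $\rho(Z)=1$ and $K_Z$ is Cartier, $K_Z\equiv\lambda H$ for the ample generator $H$; since $K_Z^2=9-2k\ne 0$ we have $\lambda\ne 0$, and the sign of $\lambda$ decides whether $K_Z$ or $-K_Z$ is ample. Thus $\kappa(Z)\in\{-\infty,2\}$ only: either $K_Z$ is ample ($Z$ of general type) or $-K_Z$ is ample ($Z$ a Gorenstein log del Pezzo, hence rational), and in both cases $K_Z^2>0$ forces $k\le 4$.

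Suppose $K_Z$ is ample. Then $K_Z^2=9-2k=3\bigl(3-\tfrac{2k}{3}\bigr)=3\,e_{\mathrm{orb}}(Z)$, where $e_{\mathrm{orb}}(Z)=e(Z)-\sum_p(1-\tfrac1{|G_p|})$ with $|G_p|=3$; so equality holds in the orbifold Bogomolov--Miyaoka--Yau inequality for the log terminal surface $Z$, and $Z$ is a compact ball quotient ${\bf B}/\tilde\Gamma$. A torsion-free fake-projective-plane sublattice $\Pi\subset\tilde\Gamma$ then satisfies $K^2_{{\bf B}/\Pi}=[\tilde\Gamma:\Pi]\,(9-2k)=9$, so $[\tilde\Gamma:\Pi]=9/(9-2k)$ must be a positive integer; hence $9-2k\in\{1,3\}$, i.e. $k\in\{3,4\}$. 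Therefore $Z$ satisfies description (1) (if $k=3$, $K^2=3$) or (2) (if $k=4$, $K^2=1$) of Theorem \ref{k08}, and Theorem \ref{main1}(1),(2) produces a fake projective plane $X$ with $Z\cong X/C_3$, respectively $Z\cong X/C_3^2$; in the latter case $\Aut(X)\supseteq C_3^2$ together with the classification of $\Aut(X)$ forces $\Aut(X)=C_3^2$. This yields cases (1) and (2), and $k\in\{3,4\}$ with this identification is exactly what the Prasad--Yeung and Cartwright--Steger classification records.

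Now suppose $-K_Z$ is ample, so $Z$ is a Gorenstein del Pezzo of Picard number one and degree $d=9-2k$ with singularities $kA_2$. Its minimal resolution is a weak del Pezzo of degree $d$ whose only $(-2)$-curves are the $2k$ exceptional ones; as $\rho(Z)=1$ these generate the full root lattice $R_d$ (with $R_1=E_8$, $R_3=E_6$, $R_5=A_4$, $R_7=A_1$), so $A_2^{\oplus k}$ must be a sub-root-system of full rank $2k=9-d$ in $R_d$. Orthogonal copies of $A_2$ need pairwise disjoint triples of coordinates, so such an embedding exists only as $3A_2\subset E_6$ and $4A_2\subset E_8$, and fails for $A_4$ and $A_1$; hence again $k\in\{3,4\}$. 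For $k=3$ the surface is a cubic with $3A_2$ and $\rho=1$, which is unique up to projective equivalence, namely $\{u_0u_1u_2=u_3^3\}\subset\bbP^3$, the image of $[x:y:z]\mapsto[x^3:y^3:z^3:xyz]$; thus $Z\cong\bbP^2/\langle\sigma\rangle$ with $\sigma(x,y,z)=(x,\omega y,\omega^2 z)$, which is case (3).

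For $k=4$, $Z$ is a degree one del Pezzo with $4A_2$, and I would identify it with $\bbP^2/\langle\sigma,\tau\rangle$ by mimicking the cover construction behind Theorem \ref{main1}(2): using the order three local class groups of the $A_2$ points and $b_1(Z)=0$, build a tower of $C_3$-covers, \'etale over the smooth locus, whose total space $W$ smooths all four cusps. Then $W$ is a smooth rational $\bbQ$-homology projective plane with $K_W^2=9$, hence $W\cong\bbP^2$, so $Z\cong\bbP^2/G$ with $G\cong C_3^2\subset\PGL_3(\bbC)$ acting with four $A_2$-orbits and no fixed curve; up to conjugacy the only such $G$ is the Heisenberg group $\langle\sigma,\tau\rangle$ with $\tau(x,y,z)=(z,ax,a^{-1}y)$, giving case (4). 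The main obstacle is precisely this last identification: Theorem \ref{main1} is stated only for $Z$ of general type, so its cover construction cannot be quoted directly but must be re-established over a rational base, and one must check that $W$ really becomes smooth and that the induced $G$ is conjugate to the stated Heisenberg group (equivalently, identify the anticanonical elliptic pencil on the resolution with the Hesse pencil of four triangles, which also accounts for the parameter $a$). Everything else is numerology together with the cited ball-quotient and del Pezzo classifications.
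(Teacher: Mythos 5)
Your proposal breaks down at one essential point: the exclusion of $k=1$ and $k=2$ cusps in the case where $K_Z$ is ample. Granting the orbifold Bogomolov--Miyaoka--Yau equality $K_Z^2=3e_{\mathrm{orb}}(Z)=9-2k$ and the resulting uniformization $Z\cong \mathbf{B}/\tilde\Gamma$, your next step quantifies over ``a torsion-free fake-projective-plane sublattice $\Pi\subset\tilde\Gamma$'' --- but the existence of such a $\Pi$ is precisely what is in question. Selberg's lemma provides torsion-free finite-index subgroups $\Pi\subset\tilde\Gamma$, and for any such subgroup $\mathbf{B}/\Pi$ is a smooth ball quotient with $K^2=[\tilde\Gamma:\Pi]\,(9-2k)$; nothing forces this number to equal $9$, i.e.\ nothing forces any smooth cover of $Z$ to be a fake projective plane. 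So for $k=1$ or $k=2$ you obtain no contradiction: you have shown only that such a $Z$ would admit no fake-projective-plane cover (which for $k=1$ is automatic, since $7m\neq 9$ for every integer $m$), not that $Z$ cannot exist. The paper closes exactly this hole with Lemma 3.3 of \cite{HK}: for a $\mathbb{Q}$-homology projective plane whose canonical class is not numerically trivial, the product of $K_Z^2$ with the orders of the local abelianized fundamental groups is a perfect square; here that product is $3^k(9-2k)$, which is a square only for $k=3,4$. This criterion applies uniformly to both signs of $K_Z$ and is the one ingredient your argument is missing; without it, or a substitute, the general-type branch of your proof is incomplete.

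The rest of your proposal is sound, and in places genuinely different from the paper. Your bound $k\le 4$ (from $K_Z\equiv\lambda H$ with $\lambda\neq 0$, so $K_Z^2=\lambda^2H^2>0$) replaces the paper's appeal to the classification in \cite{HK} of $\mathbb{Q}$-homology projective planes with the maximal number of singular points; and your exclusion of $k=1,2$ in the log del Pezzo case by root-lattice considerations ($A_2$ does not embed in the degree-$7$ root system $A_1$, and $2A_2$ cannot be a full-rank sub-root-lattice of $A_4$, e.g.\ because $9\neq 5m^2$) is a correct alternative, in that case, to the paper's square criterion. Once $k\in\{3,4\}$ is established, your use of Theorem \ref{main1} in the ample case coincides with the paper's, including the observation that $\mathrm{Aut}(X)=C_3^2$ in case (2). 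In the del Pezzo case the paper does not invoke uniqueness of the $3A_2$-cubic; instead it reruns the lattice-theoretic cover construction of Section 2 (Lemmas \ref{3A2} and \ref{2covers}), which uses only $\mathrm{Pic}(\tilde Z)=H^2(\tilde Z,\mathbb{Z})$, $p_g=q=0$, and the configuration of $(-2)$-curves --- not the general-type hypothesis. So the ``main obstacle'' you flag for case (4) is smaller than you suggest: the covers exist by those same lemmas, the top cover is automatically $\mathbb{P}^2$ because $-K_X=\pi^*(-K_Z)$ is ample with $K_X^2=9$, and what remains is only the identification of the Galois group with a conjugate of $\langle\sigma,\tau\rangle$.
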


\begin{remark}  In differential topology, they use two notions ``exotic $\mathbb{P}^2$" and ``fake $\mathbb{P}^2$".
An exotic $\mathbb{P}^2$ is a simply connected symplectic
  4-manifold homeomorphic to, but
  not diffeomorphic to $\mathbb{P}^2$. The existence of such a
 4-manifold is not known yet. It does not exist in complex
 category.
\end{remark}


\bigskip
{\bf Notation}
\begin{itemize}
\item $K_Y$: the canonical class of $Y$ \item $b_i(Y):$ the $i$-th
Betti number of $Y$ \item $e(Y):$ the topological Euler number of
$Y$  \item $q(X):=\dim H^1(X,\mathcal{O}_{X})$, the irregularity
of a surface $X$ \item $p_g(X):=\dim H^2(X,\mathcal{O}_{X})$, the
geometric genus of a surface $X$
\end{itemize}

\section{Preliminaries}

First, we recall the toplogical and holomorphic Lefschetz fixed
point formulas.

\medskip \noindent {\bf Toplogical Lefschetz Fixed Point Formula.}
{\it Let $M$ be a topological manifold of dimension $m$ admitting
a homeomrphism $\sigma$.  Then the Euler number of the fixed locus
$M^{\sigma}$ of $\sigma$ is equal to the alternating sum of the
trace of $\sigma^*$ acting on $H^j(M, \mathbb Z)$, i.e.,}
$$e(M^{\sigma})=\sum_{j=0}^m (-1)^j Tr\sigma^* |H^j(M, \mathbb
Z).$$

\medskip\noindent {\bf Holomorphic Lefschetz Fixed Point
Formula.}(\cite{AS3}, p. 567) {\it Let $M$ be a complex manifold
of dimension $2$ admitting an automorphism $\sigma$. Let $p_1,
\ldots, p_l$ be the isolated fixed points of $\sigma$ and
$R_1,\ldots, R_k$ be the $1$-dimensional components of the fixed
locus $S^{\sigma}$. Then
$$\sum_{j=0}^2 (-1)^j Tr\sigma^* |H^j(M, {\mathcal O}_M)=\sum_{j=1}^l\dfrac{1}{\det(I-d\sigma)|T_{p_j}}+
\sum_{j=1}^k\Big\{\dfrac{1-g(R_j)}{1-\xi_j}-\dfrac{\xi_j
R_j^2}{(1-\xi_j)^2}\Big\},$$ where $T_{p_j}$ is the tangent space
at $p_j$, $g(R_j)$ is the genus of $R_j$ and $\xi_j$ is
the eigenvalue of the differential $d\sigma$ acting on the normal bundle of $R_j$ in $M$.\\
Assume further that $\sigma$ is of finite and prime order $p$.
Then
$$\dfrac{1}{p-1}\sum_{i=1}^{p-1}\sum_{j=0}^2 (-1)^j Tr\sigma^{i*} |H^j(M, {\mathcal O}_M)=
\sum_{i=1}^{p-1}a_ir_i+
\sum_{j=1}^k\Big\{\frac{1-g(R_j)}{2}+\frac{(p+1)R_j^2}{12}
\Big\},$$ where $r_i$ is the number of isolated fixed points of
$\sigma$ of type $\frac{1}{p}(1,i)$, and
$$a_i=\frac{1}{p-1}\sum_{j=1}^{p-1}\frac{1}{(1-\zeta^j)(1-\zeta^{ij})}$$
with $\zeta=\exp(\frac{2\pi\sqrt{-1}}{p})$, e.g.,
$a_1=\frac{5-p}{12},\,\,a_2=\frac{11-p}{24}$, etc.}

\bigskip
For a complex manifold $M$ of dimension $2$ with
$K_M^2=3c_2(M)=9$, it is known that $$p_g(M)=q(M)\le 2.$$ Indeed,
such a surface $M$ has $\chi(\mathcal O_M)=1$, $p_g(M)=q(M)$, and
is a ball-quotient or $\mathbb P^2$. Since $c_2(M)=3$, $M$ cannot
be fibred over a curve of genus $\ge 2$. Thus by Castelnuovo-de
Franchis theorem, $p_g(M)\ge 2q(M)-3$, which implies
$p_g(M)=q(M)\le 3$. The case of $p_g(M)=q(M)= 3$ was eliminated by
the classification result of Hacon and Pardini \cite{HP} (see also
\cite{Pirola} and \cite{CCM}).

\begin{proposition}\label{lf} Let $M$ be a complex manifold $M$ of dimension $2$ with $K_M^2=3c_2(M)=9$. Then, the following hold true.
\begin{enumerate}
        \item If $M$ admits an order 7 automorphism $\sigma$ with isolated
        fixed points only, then $p_g(M/\langle\sigma\rangle)=q(M/\langle\sigma\rangle)=p_g(M)=q(M)$,
        and $M/\langle\sigma\rangle$ has either 3 singular points of type $\frac{1}{7}(1,5)$ or 2 singular points of type $\frac{1}{7}(1,2)$
        and 1 singular point of type $\frac{1}{7}(1,6)$.
        \item If $M$ has $p_g(M)=q(M)=1$ and admits an order 3 automorphism $\sigma$ with isolated
        fixed points only, then
        \begin{enumerate}
        \item $p_g(M/\langle\sigma\rangle)=q(M/\langle\sigma\rangle)=0$,
        and $M/\langle\sigma\rangle$ has 6 singular points of type $\frac{1}{3}(1,1)$; or
        \item $p_g(M/\langle\sigma\rangle)=1, \,\, q(M/\langle\sigma\rangle)=0$,
        and $M/\langle\sigma\rangle$ has 3 singular points of type $\frac{1}{3}(1,1)$ and 6 singular points of type $\frac{1}{3}(1,2)$; or
        \item $p_g(M/\langle\sigma\rangle)=q(M/\langle\sigma\rangle)=1$,
        and $M/\langle\sigma\rangle$ has 3 singular points of type $\frac{1}{3}(1,2)$.
\end{enumerate}
\end{enumerate}
\end{proposition}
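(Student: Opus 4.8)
\medskip

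The plan is to extract everything from the two Lefschetz fixed point formulas applied to $\sigma$ and its powers, starting from the standing facts that $\chi(\calO_M)=1$, that $p_g(M)=q(M)=:q\le 2$, and that $M$ is either $\bbP^2$ or a ball quotient (hence rational resp. of general type after passing to the quotient). Since by hypothesis the fixed locus is finite, there are no one-dimensional components and the curve terms in the holomorphic formula drop out.

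First I would record three global inputs. A free $\bbZ/p$-action is impossible, because it would force $\chi(\calO_{M/\langle\sigma\rangle})=\chi(\calO_M)/p=1/p\notin\bbZ$; hence $\sigma$ has fixed points and every listed case is nonempty. Next, since $\sigma$ fixes a point of the Albanese variety $\mathrm{Alb}(M)$ it acts there with finite-order linear part; for $p=7$ a nontrivial such action on $H_1(\mathrm{Alb}(M),\bbQ)\cong\bbQ^{2q}$ would need $2q\ge[\bbQ(\zeta)\!:\!\bbQ]=6$, impossible for $q\le 2$, so $\sigma$ is trivial on $\mathrm{Alb}(M)$ and on $H^1(M,\calO_M)$, giving $q(M/\langle\sigma\rangle)=q$; for $p=3$ the linear part is trivial (then $q(M/\langle\sigma\rangle)=1$) or of order $3$ (then $q(M/\langle\sigma\rangle)=0$). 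Finally, as $M/\langle\sigma\rangle$ is rational or of general type we have $\chi(\calO_{M/\langle\sigma\rangle})\ge 1$, while $p_g(M/\langle\sigma\rangle)\le p_g(M)=q$; combined with $\chi(\calO_{M/\langle\sigma\rangle})=1-q(M/\langle\sigma\rangle)+p_g(M/\langle\sigma\rangle)$ this yields $p_g(M/\langle\sigma\rangle)\ge q(M/\langle\sigma\rangle)$. In case (1) (where $q(M/\langle\sigma\rangle)=q$) this forces $p_g(M/\langle\sigma\rangle)=q$, i.e. $\sigma$ acts trivially on $H^2(M,\calO_M)$ as well; in case (2) it leaves exactly $(q,p_g)(M/\langle\sigma\rangle)\in\{(1,1),(0,1),(0,0)\}$, the cases (c),(b),(a), ruling out only $(1,0)$.

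For the fixed points themselves I would feed this back into the formulas. In case (1), triviality of the $\sigma$-action on $H^1(M,\calO_M)$ and $H^2(M,\calO_M)$ makes the topological formula read $\mathrm{Tr}(\sigma^t\mid H^{1,1})=N-2+2q$ for every $t\neq 0$, where $N$ is the number of fixed points; each nontrivial character of $\bbZ/7$ then occurs in $H^{1,1}$ with multiplicity $(3-N)/7\in\bbZ_{\ge 0}$, which with $N\ge 1$ forces $N=3$ and trivial $\sigma$-action on all of $H^2(M,\bbC)$. The holomorphic formula now becomes $\sum_{p}1/\det(I-d\sigma^t\mid T_p)=1$ for all $t$, a single Galois-stable equation over $\bbQ(\zeta)$ in the three tangent eigenvalue pairs; solving it (a finite check among pairs of $7$th roots of unity, using that the summands are conjugate under $\zeta\mapsto\zeta^t$) gives exactly the two configurations $3\cdot\frac{1}{7}(1,5)$ and $2\cdot\frac{1}{7}(1,2)+\frac{1}{7}(1,6)$ of quotient singular points. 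In case (2) I would write $n_{11},n_{22},n_{12}$ for the numbers of tangent types $(\omega,\omega),(\omega^2,\omega^2),(\omega,\omega^2)$; the real and imaginary parts of the holomorphic formula for $\sigma$, together with the topological formula at $\sigma$ and $\sigma^2$ and nonnegativity, pin down $(1,1)\!\to\!3\cdot\frac{1}{3}(1,2)$ and $(0,1)\!\to\!3\cdot\frac{1}{3}(1,1)+6\cdot\frac{1}{3}(1,2)$ uniquely.

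The hard part will be case (a), $(q,p_g)(M/\langle\sigma\rangle)=(0,0)$: there the Lefschetz data are consistent with two solutions, the desired $N=6$ configuration $6\cdot\frac{1}{3}(1,1)$ (trivial action on $H^{1,1}$) and a spurious $N=3$ configuration $3\cdot\frac{1}{3}(1,2)$ (with $\mathrm{Tr}(\sigma\mid H^{1,1})=0$). To exclude the latter I would argue geometrically: $q(M/\langle\sigma\rangle)=0$ means $\sigma$ acts on $\mathrm{Alb}(M)=E$ as a nontrivial order-$3$ automorphism, with $E/\langle\sigma\rangle\cong\bbP^1$, so the Albanese map descends to a fibration $M/\langle\sigma\rangle\to\bbP^1$ and hence $b_2(M/\langle\sigma\rangle)\ge 2$. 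Since $p_g(M/\langle\sigma\rangle)=0$ gives $b_2(M/\langle\sigma\rangle)=\dim H^{1,1}(M)^{\sigma}$ and $\dim H^{1,1}(M)=3$, the only admissible invariant dimension is $3$, i.e. $\sigma$ acts trivially on $H^{1,1}$, which yields $N=6$ and $6\cdot\frac{1}{3}(1,1)$; the spurious solution, which would give $b_2(M/\langle\sigma\rangle)=1$, is incompatible with the fibration. This fibration/Picard-number input is precisely the step that goes beyond the Lefschetz formulas.
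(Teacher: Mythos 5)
Your proposal follows the same general strategy as the paper (Lefschetz formulas plus integrality/Galois constraints plus one geometric input), and several of your sub-arguments are correct variants of the paper's: the Albanese linear-part argument for triviality of $\sigma^*$ on $H^1$ is a clean restatement of the paper's integrality argument, and your treatment of case (2)(a) --- descending the Albanese fibration to $\mathbb{P}^1$ to get $b_2(M/\langle\sigma\rangle)\ge 2$ and thereby kill the trace-zero solution on $H^{1,1}$ --- is a valid alternative to the paper's, which instead notes that $\sigma^*$ fixes both the Albanese fibre class and $K_M$, so $\mathrm{Tr}(\sigma^*\mid H^{1,1}(M))=2+\zeta^k$ is an integer, hence equal to $3$. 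However, there is a genuine gap at your load-bearing claim that $M/\langle\sigma\rangle$ is ``rational or of general type,'' hence $\chi(\mathcal{O}_{M/\langle\sigma\rangle})\ge 1$, hence $p_g(M/\langle\sigma\rangle)\ge q(M/\langle\sigma\rangle)$. Kodaira dimension is not preserved by finite quotients, and the premise is false precisely in the situation this proposition serves: by Theorem 0.2 of the paper, the quotient of a fake projective plane (a ball quotient, of general type) by an order~$7$ automorphism has as minimal resolution a $(2,3)$-, $(2,4)$- or $(3,3)$-elliptic surface, i.e.\ Kodaira dimension $1$. Once the quotient may fail to be rational or of general type, $\chi\ge 1$ is not automatic: surfaces with $q=1$, $p_g=0$, $\chi=0$ exist (bielliptic surfaces, elliptic quasi-bundles), and nothing in your argument excludes the resolution of $M/\langle\sigma\rangle$ from being of that kind. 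The damage is concentrated in case (2): your exclusion of $(q,p_g)(M/\langle\sigma\rangle)=(1,0)$, which you need in order to know that (a), (b), (c) exhaust all cases, rests entirely on this false claim.

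The gap is repairable with tools you already set up, and the repairs are exactly what the paper does. For the $(1,0)$ exclusion in case (2): the paper shows that in this case $\mathrm{Tr}(\sigma^*\mid H^{1,1}(M))=3$ and then the topological formula gives $e(M^\sigma)=0$, so $\sigma$ would act freely, contradicting $\chi(\mathcal{O}_M)=1$; alternatively, in your own notation, the holomorphic formula with $\zeta^j\neq 1$ on $H^{0,2}(M)$ and trivial action on $H^{0,1}(M)$ reads (for $\zeta^j=\omega$, after writing $-\omega^2=1+\omega$) $3\omega=(n_{11}+n_{12})+(n_{11}-n_{22})\omega$, forcing $n_{11}+n_{12}=0$ and $n_{11}-n_{22}=3$, i.e.\ $n_{22}=-3<0$, a contradiction. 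In case (1) you also invoke the false claim (to get triviality on $H^{0,2}(M)$ before running the topological formula), but there it is harmless: run your Galois-orbit argument on all of $H^2(M,\mathbb{Q})$ rather than only on $H^1$; rationality of the character gives $\mathrm{Tr}(\sigma^*\mid H^2)=(1+4q)-7b$ with $b\ge 0$ the common multiplicity of the nontrivial characters, and then the topological formula together with $e(M^\sigma)>0$ gives $N=3-7b>0$, so $b=0$, $\sigma^*$ is trivial on all of $H^2(M,\mathbb{C})$ (in particular $p_g(M/\langle\sigma\rangle)=p_g(M)$), and $N=3$. With these two substitutions your proof goes through; as written, it does not.
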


\begin{proof}
Note that $M$ cannot admit an automorphism of finite order acting
freely, because $\chi(\mathcal O_M)=1$ not divisible by any
integer $\ge 2$.

\medskip\noindent
(1) By Hodge decomposition theorem,
$$Tr\sigma^* |H^1(M, \mathbb Z)=Tr\sigma^* |H^1(M, \mathbb
C)=Tr\sigma^{*} |(H^{0,1}(M)\oplus H^{1,0}(M)).$$ Note that this
number is an integer. Let $\zeta=\exp(\frac{2\pi\sqrt{-1}}{7})$.

Assume that $p_g(M)=q(M)=2$. Let $\zeta^i$ and $\zeta^j$ be the
eigenvalues of $\sigma^{*}$ acting on $H^{0,1}(M)$. Then
$$Tr\sigma^* |H^1(M, \mathbb
Z)=\zeta^i+\zeta^j+\bar{\zeta}^i+\bar{\zeta}^j,$$ and this is an
integer iff  $\zeta^i=\zeta^j=1$. This implies that $ Tr\sigma^*
|H^{0,1}(M)=2$ and $q(M/\langle\sigma\rangle)=q(M)=2$. By the
Toplogical Lefschetz Fixed Point Formula, $e(M^{\sigma})=-6+
Tr\sigma^* |H^2(M, \mathbb Z)$, so $6< Tr\sigma^* |H^2(M, \mathbb
Z)$. Since $$\rank H^2(M, \mathbb Z)=1+4q(M)=9,$$ it follows that
$Tr\sigma^* |H^2(M, \mathbb Z)=9$ and $e(M^{\sigma})=3$. In
particular, $Tr\sigma^* |H^{0,2}(M)=2$ and
$p_g(M/\langle\sigma\rangle)=p_g(M)=2$. By the Holomorphic
Lefschetz Fixed Point Formula,
$$1=-\dfrac{1}{6}r_1+\dfrac{1}{6}(r_2+r_4)+\dfrac{1}{3}(r_3+r_5)+ \dfrac{2}{3}r_6,$$
where $r_i$ is the number of isolated fixed points of $\sigma$ of
type $\frac{1}{7}(1,i)$. Since $$\sum r_i=e(M^{\sigma})=3,$$ we
have two solutions: $$r_3+r_5=3, r_1=r_2=r_4=r_6=0;\,\,\,
r_2+r_4=2, r_6=1, r_1=r_3=r_5=0.$$

Assume that $p_g(M)=q(M)=1$. By the same argument, $Tr\sigma^*
|H^{0,1}(M)=1$, $Tr\sigma^* |H^2(M, \mathbb Z)=5$,
$e(M^{\sigma})=3$ and  $Tr\sigma^* |H^{0,2}(M)=1$.

Assume that $p_g(M)=q(M)=0$. Then $Tr\sigma^*
|H^{0,1}(M)=Tr\sigma^* |H^{0,2}(M)=0$, $Tr\sigma^* |H^2(M, \mathbb
Z)=1$ and $e(M^{\sigma})=3$.

\medskip\noindent
(2) First note that $p_g(M/\langle\sigma\rangle)\le 1$ and
$q(M/\langle\sigma\rangle)\le 1$.\\ Let $\zeta^i$ and $\zeta^j$ be
the eigenvalues of $\sigma^{*}$ acting on $H^{0,1}(M)$ and
$H^{0,2}(M)$, respectively, where
$\zeta=\exp(\frac{2\pi\sqrt{-1}}{3})$.\\
Also note that $\rank H^{1,1}(M)=1+2q(M)=3$. Since $\sigma^{*}$
fixes the class of a fibre of the Albanese fibration $X\to Alb(X)$
and the class of $K_X$, we have $Tr\sigma^{*}
|H^{1,1}(M)=2+\zeta^k$.

Assume that
$p_g(M/\langle\sigma\rangle)=q(M/\langle\sigma\rangle)=0$.  Then
$\zeta^i\neq 1$ and $\zeta^j\neq 1$, hence
$$Tr\sigma^* |H^1(M, \mathbb Z)=Tr\sigma^{*} |(H^{0,1}(M)\oplus
H^{1,0}(M))=\zeta^i+\bar{\zeta}^i=-1,$$ $$Tr\sigma^{*}
|(H^{0,2}(M)\oplus H^{2,0}(M))=\zeta^j+\bar{\zeta}^j=-1.$$ The
latter implies that $Tr\sigma^{*} |H^{1,1}(M)$ is an integer,
hence $Tr\sigma^{*} |H^{1,1}(M)=3$. Then
 by the Toplogical
Lefschetz Fixed Point Formula, $e(M^{\sigma})=6$. By the
Holomorphic Lefschetz Fixed Point Formula,
$$1=\dfrac{1}{6}r_1+\dfrac{1}{3}r_2,$$
where $r_i$ is the number of isolated fixed points of $\sigma$ of
type $\frac{1}{3}(1,i)$. Since $r_1+r_2=e(M^{\sigma})=6$, we have
a unique solution: $r_1=6,\, r_2=0$. This gives (a).

Assume that $p_g(M/\langle\sigma\rangle)=1$ and
$q(M/\langle\sigma\rangle)=0$.  Then $\zeta^i\neq 1$ and $\zeta^j=
1$, hence
$$Tr\sigma^* |H^1(M, \mathbb Z)=Tr\sigma^{*} |(H^{0,1}(M)\oplus
H^{1,0}(M))=\zeta^i+\bar{\zeta}^i=-1,$$ $$Tr\sigma^{*}
|(H^{0,2}(M)\oplus H^{2,0}(M))=1+1=2.$$ The latter implies that
$Tr\sigma^{*} |H^{1,1}(M)$ is an integer, hence $Tr\sigma^{*}
|H^{1,1}(M)=3$. Then by the Toplogical Lefschetz Fixed Point
Formula, $e(M^{\sigma})=9$. By the Holomorphic Lefschetz Fixed
Point Formula,
$$\dfrac{1}{2}\{(1-\zeta^i+1)+(1-\zeta^{2i}+1)\}=\dfrac{5}{2}=\dfrac{1}{6}r_1+\dfrac{1}{3}r_2.$$
 Since $r_1+r_2=9$, we have
a unique solution: $r_1=3,\, r_2=6$. This gives (b).

Assume that
$p_g(M/\langle\sigma\rangle)=q(M/\langle\sigma\rangle)=1$.  Then
Then $\zeta^i=\zeta^j= 1$, hence
$$Tr\sigma^{*} |(H^{0,1}(M)\oplus
H^{1,0}(M))=Tr\sigma^{*} |(H^{0,2}(M)\oplus H^{2,0}(M))=2,$$
$Tr\sigma^{*} |H^{1,1}(M)=3$ and  $e(M^{\sigma})=3$. By the
Holomorphic Lefschetz Fixed Point Formula,
$$1=\dfrac{1}{6}r_1+\dfrac{1}{3}r_2.$$
Since $r_1+r_2=3$, we have a unique solution: $r_1=0,\, r_2=3$.
This gives (c).

Assume that $p_g(M/\langle\sigma\rangle)=0$ and
$q(M/\langle\sigma\rangle)=1$.  Then $\zeta^i=1$ and $\zeta^j\neq
1$, hence $$Tr\sigma^{*} |(H^{0,1}(M)\oplus H^{1,0}(M))=2,\,\,\,
Tr\sigma^{*} |(H^{0,2}(M)\oplus
H^{2,0}(M))=\zeta^j+\bar{\zeta}^j=-1,$$ $Tr\sigma^{*}
|H^{1,1}(M)=3$ and $e(M^{\sigma})=0$. Thus $\sigma$ acts freely, a
contradiction.
\end{proof}

\begin{proposition}\label{ab} Let $M$ be an abelian surface. Assume that it admits an order 3 automorphism
$\sigma$ such that $p_g(M/\langle\sigma\rangle)=0$. Then
$b_2(M/\langle\sigma\rangle)=4$ or $2$.
\end{proposition}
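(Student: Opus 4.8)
The plan is to compute $b_2(M/\langle\sigma\rangle)$ as the dimension of the $\sigma$-invariant part of the second rational cohomology of $M$. Since $\langle\sigma\rangle$ is a finite group acting on the compact manifold $M$, the transfer isomorphism gives
$$H^i(M/\langle\sigma\rangle,\mathbb{Q})\cong H^i(M,\mathbb{Q})^{\sigma}$$
for all $i$ (the cyclic quotient singularities are rational, so these rational Betti numbers are the expected ones). Hence $b_2(M/\langle\sigma\rangle)=\dim_{\mathbb{C}} H^2(M,\mathbb{C})^{\sigma}$, and everything reduces to determining the dimension of the invariant subspace of $H^2(M,\mathbb{C})$.

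For an abelian surface the action of $\sigma$ on cohomology factors through its linear part, since translations act trivially on $H^*(M)$, and one has $H^*(M,\mathbb{C})=\wedge^*H^1(M,\mathbb{C})$. As $\sigma$ has order $3$, $\sigma^*$ is diagonalizable with cube-root-of-unity eigenvalues; write $H^{1,0}(M)=\langle dz_1,dz_2\rangle$ and let $\zeta^{b_1},\zeta^{b_2}$ be the eigenvalues of $\sigma^*$ on $H^{1,0}(M)$, where $\zeta=\exp(2\pi\sqrt{-1}/3)$ and $b_1,b_2\in\{0,1,2\}$. In eigencoordinates, $H^2=\wedge^2H^1$ has the eigenbasis $dz_1\wedge dz_2$ with eigenvalue $\zeta^{b_1+b_2}$ spanning $H^{2,0}$, its conjugate $d\bar z_1\wedge d\bar z_2$ with eigenvalue $\zeta^{-(b_1+b_2)}$ spanning $H^{0,2}$, and the four vectors $dz_i\wedge d\bar z_j$ with eigenvalues $\zeta^{\,b_i-b_j}$ spanning $H^{1,1}$.

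The hypothesis $p_g(M/\langle\sigma\rangle)=0$ says exactly that $\sigma^*$ acts nontrivially on $H^{2,0}(M)$, i.e. $b_1+b_2\not\equiv 0\pmod 3$; consequently neither $H^{2,0}$ nor $H^{0,2}$ meets the invariants. Inside $H^{1,1}$ the diagonal vectors $dz_1\wedge d\bar z_1$ and $dz_2\wedge d\bar z_2$ are always $\sigma$-invariant, whereas $dz_1\wedge d\bar z_2$ and $dz_2\wedge d\bar z_1$ are invariant precisely when $b_1\equiv b_2\pmod 3$. Therefore $\dim H^2(M,\mathbb{C})^{\sigma}=4$ if $b_1\equiv b_2$ and $=2$ otherwise. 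Finally the constraint $b_1+b_2\not\equiv 0$ rules out the residue pairs $(0,0)$ and $(1,2)$, so that whenever $b_1\not\equiv b_2$ one of them must be $0$, while whenever $b_1\equiv b_2$ both are nonzero; in either situation the two admissible values are exactly $4$ and $2$. This yields $b_2(M/\langle\sigma\rangle)=4$ or $2$.

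The step demanding the most care is the opening comparison $H^*(M/\langle\sigma\rangle,\mathbb{Q})\cong H^*(M,\mathbb{Q})^{\sigma}$, where one must ensure the quotient singularities do not perturb the rational Betti numbers; the remainder is the routine eigenvalue bookkeeping on $\wedge^2H^1$ recorded above. As an alternative to the transfer argument one can obtain $\mathrm{Tr}\,\sigma^*|H^2(M,\mathbb{Z})$ from the Topological Lefschetz Fixed Point Formula and recover the invariant dimension via $\dim H^2(M,\mathbb{C})^{\sigma}=\tfrac{1}{3}\sum_{i=0}^{2}\mathrm{Tr}\,\sigma^{i*}|H^2(M,\mathbb{C})$, arriving at the same dichotomy.
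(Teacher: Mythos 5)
Your proof is correct, and it takes a genuinely different route from the paper's at the decisive step. Both arguments begin the same way: identify $b_2(M/\langle\sigma\rangle)$ with $\dim H^2(M,\mathbb{Q})^{\sigma}$ and use $p_g(M/\langle\sigma\rangle)=0$ to kill the invariants of $H^{2,0}\oplus H^{0,2}$. But where you then diagonalize the action explicitly, the paper argues abstractly: since $\mathrm{Tr}\,\sigma^{*}|(H^{2,0}(M)\oplus H^{0,2}(M))=-1$ and $\mathrm{Tr}\,\sigma^{*}|H^2(M,\mathbb{Z})$ is an integer, the trace on $H^{1,1}(M)$ is an integer, and being a sum of four cube roots of unity it must be $4$, $1$ or $-2$; the value $-2$ is excluded because averaging an ample divisor produces a $\sigma^{*}$-invariant class in $H^{1,1}(M)$, and the traces $4$ and $1$ correspond to invariant rank $4$ and $2$. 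You instead exploit the structure of abelian surfaces --- automorphisms of a complex torus are affine, so $\sigma^{*}$ acts through its linear part on $H^{*}=\wedge^{*}H^{1}$ --- and read off the eigenvalues $1,\,1,\,\zeta^{b_1-b_2},\,\zeta^{b_2-b_1}$ on $H^{1,1}(M)$. Your computation buys sharper information: the invariant part of $H^{1,1}$ automatically contains the two diagonal classes $dz_i\wedge d\bar z_i$, so no ample-class trick is needed, and you obtain the exact dichotomy ($b_2=4$ precisely when $b_1\equiv b_2 \pmod 3$, and $b_2=2$ otherwise). The paper's argument is softer but more portable: it uses nothing about $M$ beyond $p_g(M)=1$, $h^{1,1}(M)=4$ and the existence of an invariant ample class, which is why essentially the same trace-integrality computation recurs in the proof of Proposition \ref{lf}. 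Two minor points: your parenthetical about rational singularities is superfluous, since the transfer isomorphism over $\mathbb{Q}$ for a finite group acting on a compact space needs no hypothesis on the singularities of the quotient; and your closing alternative via the Topological Lefschetz Fixed Point Formula is not self-contained as stated, because that formula produces the trace only once $e(M^{\sigma})$ is known, which is not part of the given data. Neither point affects the correctness of your main argument.
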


\begin{proof}  First note that $p_g(M)= 1$ and $\rank H^{1,1}(M)=4$. Let
$\zeta=\exp(\frac{2\pi\sqrt{-1}}{3})$.\\ Let $\zeta^k$ be the
eigenvalue of $\sigma^{*}$ acting on $H^{0,2}(M)$. Since
$p_g(M/\langle\sigma\rangle)=0$, we have $\zeta^k\neq 1$, hence
$$Tr\sigma^{*}
|(H^{0,2}(M)\oplus H^{2,0}(M))=\zeta^k+\bar{\zeta}^k=-1.$$ It
implies that $Tr\sigma^{*} |H^{1,1}(M)$ is an integer, hence is
equal to 4, 1 or $-2$.  The last possibility can be ruled out, as
there is a $\sigma$-invariant ample divisor yielding a
$\sigma^*$-invariant vector in $H^{1,1}(M)$. Finally note that
$b_2(M/\langle\sigma\rangle)=\rank H^{1,1}(M)^{\sigma}$.
\end{proof}

\begin{remark} If in addition, $q(M/\langle\sigma\rangle)=0$, then
either
\begin{enumerate} \item $r_2=0$,\,\, $r_1-\sum R_j^2=9$,\,\,
$b_2(M/\langle\sigma\rangle)=4$; or
\item $r_2=3$,\,\, $r_1-\sum R_j^2=3$,\,\,
$b_2(M/\langle\sigma\rangle)=2$.
\end{enumerate}
 Here $r_i$ is the number of isolated fixed points of type
 $\frac{1}{3}(1,i)$, and $\cup R_j$ is the 1-dimensional fixed locus of
 $\sigma$.
\end{remark}

\begin{proposition}\label{p=q=2} Let $M$ be a surface of general type with $p_g(M)=q(M)=2$.
Assume that it admits an order 3 automorphism $\sigma$ with
isolated fixed points only such that
$p_g(M/\langle\sigma\rangle)=q(M/\langle\sigma\rangle)=0$. Let
$\bar{a}: M/\langle\sigma\rangle\to Alb(M)/\langle\sigma\rangle$
be the map induced by the Albanese map $a: M\to Alb(M)$. Then
$\bar{a}$ cannot factor through a surjective map
$M/\langle\sigma\rangle\to N$ to a normal projective surface $N$
with Picard number 1.
\end{proposition}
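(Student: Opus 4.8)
The plan is to argue by contradiction. Write $\bar M=M/\langle\sigma\rangle$, $A=\mathrm{Alb}(M)$, let $\tilde\sigma$ be the automorphism of $A$ induced by $\sigma$ (so $a\circ\sigma=\tilde\sigma\circ a$), set $\bar A:=A/\langle\tilde\sigma\rangle$, and let $\rho$ denote the Picard number. Since $\chi(\mathcal O_M)=1$ is prime to $3$, $\sigma$ cannot act freely, so it has (isolated) fixed points; normalising the Albanese map I may assume $\tilde\sigma$ fixes the origin and is a group automorphism. Because $q(\bar M)=0$, the two eigenvalues of $\sigma^*$ on $H^{1,0}(M)=H^{1,0}(A)$ are primitive cube roots of unity, so after replacing $\sigma$ by $\sigma^{-1}$ if necessary there are exactly two cases, with $\zeta=\exp(2\pi\sqrt{-1}/3)$: case (A), eigenvalues $\{\zeta,\zeta^2\}$, and case (B), eigenvalues $\{\zeta,\zeta\}$. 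Now suppose $\bar a=g\circ f$ with $f\colon\bar M\to N$ surjective, $g\colon N\to\bar A$, and $\rho(N)=1$.

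The dichotomy I exploit is the dimension of $W:=\bar a(\bar M)=g(N)$, which is either a curve or all of $\bar A$ (it is positive-dimensional since $q(M)=2$ makes $\bar a$ non-constant). If $W$ is a curve, then $g$ factors as $N\to W\hookrightarrow\bar A$, and Stein factorisation of $N\to W$ gives a fibration $N\to C$ onto a smooth curve. A general fibre $F$ is the pullback of a point, hence a nonzero effective Cartier divisor with $F^2=0$; but with $\rho(N)=1$ and ample generator $H$ of $\mathrm{NS}(N)_{\mathbb Q}$ one has $F\equiv\lambda H$ with $\lambda=(F\cdot H)/H^2>0$, forcing $F^2=\lambda^2H^2>0$, a contradiction. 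This settles every case in which $\bar a$ is not surjective. In particular it settles case (A): there $a$ can never be surjective, for otherwise $a^*$ would embed $H^{2,0}(A)$, on which $\tilde\sigma^*$ acts by $\zeta\cdot\zeta^2=1$, into $H^{2,0}(M)$, producing a nonzero $\sigma$-invariant $2$-form and hence $p_g(\bar M)\ge1$, against $p_g(\bar M)=0$; so $\bar a$ is non-surjective throughout case (A).

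There remains the case $\bar a$ surjective, which (as surjectivity of $\bar a$ is equivalent to that of $a$) must be case (B): surjectivity of $a$ excludes case (A) by the $2$-form argument just given, so $\sigma^*$ is the scalar $\zeta$ on $H^{1,0}(M)$ and $\tilde\sigma$ acts as a scalar on $T_0A$. Then $g\colon N\to\bar A$ is generically finite and surjective between normal projective surfaces ($\bar A$ being $\mathbb Q$-factorial), so $g^*$ is injective on N\'eron--Severi groups and $\rho(N)\ge\rho(\bar A)$. I compute $\rho(\bar A)$ from the scalar action: $\tilde\sigma^*$ acts on $H^{2,0}(A),\,H^{1,1}(A),\,H^{0,2}(A)$ by $\zeta^2$, by $1$ (on all of the four-dimensional $H^{1,1}$), and by $\zeta$ respectively. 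Hence $p_g(\bar A)=q(\bar A)=0$ and $b_2(\bar A)=\dim H^2(A,\mathbb Q)^{\tilde\sigma}=\dim H^{1,1}(A)=4$ (consistent with Proposition \ref{ab}). Since $p_g(\bar A)=0$ forces every rational $(1,1)$-class to be algebraic, $\rho(\bar A)=b_2(\bar A)=4$, whence $\rho(N)\ge4>1$ --- the final contradiction.

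The step I expect to be the main obstacle is precisely this surjective case. The point is that it cannot be killed by a Hodge-theoretic $2$-form obstruction, because in case (B) one has $p_g(\bar A)=0$ and there is no invariant $2$-form to pull back; surjectivity of $\bar a$ is genuinely compatible with $p_g(\bar M)=q(\bar M)=0$. One is therefore forced to extract geometric input from the Picard-number-$1$ hypothesis, and the decisive observation is that a scalar-acting $\tilde\sigma$ makes $\tilde\sigma^*$ trivial on all of $H^{1,1}(A)$; this both computes $b_2(\bar A)=4$ and, via $p_g(\bar A)=0$, upgrades it to $\rho(\bar A)=4$, which no surface of Picard number $1$ can dominate.
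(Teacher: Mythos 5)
Your proposal is correct and is essentially the paper's own argument: both proofs reduce to the facts that the Picard-number-one surface $N$ can neither fibre over a curve (hence must dominate $Alb(M)/\langle\sigma\rangle$) nor dominate a surface of Picard number $\ge 2$, and both identify the Picard number of $Alb(M)/\langle\sigma\rangle$ with its second Betti number using $p_g=q=0$. The only differences are presentational: where the paper invokes Proposition \ref{ab} to get $b_2(Alb(M)/\langle\sigma\rangle)\in\{4,2\}$, you unpack that computation into an explicit eigenvalue case analysis (ruling out the $\{\zeta,\zeta^2\}$ case by pulling back the invariant $2$-form, which sharpens the count to $b_2=4$), and you supply proofs of the two assertions about Picard-number-one surfaces that the paper uses without proof.
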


\begin{proof}
Suppose that $\bar{a}$ factors through a surjective map
$M/\langle\sigma\rangle\to N$ to a normal projective surface $N$
with Picard number 1, i.e.,
$$\bar{a}: M/\langle\sigma\rangle\to N \to Alb(M)/\langle\sigma\rangle.$$
Let $b: N \to Alb(M)/\langle\sigma\rangle$ be the second map.
Since a normal projective surface with Picard number 1 cannot be
fibred over any curve, the map $b$ is surjective. Since
$p_g(M/\langle\sigma\rangle)=q(M/\langle\sigma\rangle)=0$, we have
$$p_g(N)=q(N)=0\,\,\,\textrm{and}\,\,\, p_g(Alb(M)/\langle\sigma\rangle)=q(Alb(M)/\langle\sigma\rangle)=0.$$
Since $Alb(M)/\langle\sigma\rangle$ has quotient singularities
only, its minimal resolution has $p_g=q=0$, hence
$$\Pic(Alb(M)/\langle\sigma\rangle)\otimes \mathbb Q\cong
H^2(Alb(M)/\langle\sigma\rangle, \mathbb Q).$$ By Proposition
\ref{ab}, $Alb(M)/\langle\sigma\rangle$ has Picard number 4 or 2.
This is a contradiction, as a normal projective surface with
Picard number 1 cannot be mapped surjectively onto a surface with
Picard number $\ge 2$.
\end{proof}

 Let $S$ be a normal projective surface with quotient singularities and $$f : S'
\rightarrow S$$ be a minimal resolution of $S$. It is well-known
that quotient singularities are log-terminal
 singularities. Thus one can write the adjunction formula, $$K_{S'} \underset{num}{\equiv} f^{*}K_S -
 \sum_{p \in Sing(S)}{\mathcal{D}_p},$$ where $\mathcal{D}_p = \sum(a_jA_j)$ is an effective
 $\mathbb{Q}$-divisor with $0 \leq a_j < 1$  supported on $f^{-1}(p)=\cup A_j$   for each singular point $p$.
It implies that
\[K^2_S = K^2_{S'} - \sum_{p}{\mathcal{D}_p^2}= K^2_{S'} +\sum_{p}{\mathcal{D}_pK_{S'}}.
\]
The coefficients of the $\mathbb{Q}$-divisor $\mathcal{D}_p$ can
be obtained by solving the equations
$$\mathcal{D}_pA_j=-K_{S'}A_j=2+A_j^2$$  given by the adjunction formula for each exceptional curve
$A_j\subset f^{-1}(p)$.

\section{The Proof of Theorem \ref{main1}}

\subsection{The case: $Z$ has 3 singular points of type
$\frac{1}{3}(1,2)$}

Let $p_1, p_2, p_3$ be the three singular points  of $Z$ of type
$\frac{1}{3}(1,2)$, and $\tilde{Z}\to Z$ be the minimal
resolution.

\begin{lemma}\label{3A2} There is a $C_3$-cover $X\to Z$ branched at the
three singular points of $Z$.
\end{lemma}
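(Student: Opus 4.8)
The plan is to realize $X$ as a cyclic triple cover of $Z$ associated to a suitable $3$-torsion Weil divisor class. Concretely, I would search for a class $L\in\Cl(Z)$ of order exactly $3$ whose image in each local class group $\Cl(\widehat{\mathcal O}_{Z,p_i})\cong\bbZ/3$ is a generator. Given such an $L$, I fix an isomorphism $\mathcal O_Z(3L)\cong\mathcal O_Z$ and form $X=\underline{\Spec}_Z\bigl(\mathcal O_Z\oplus\mathcal O_Z(-L)\oplus\mathcal O_Z(-2L)\bigr)$ with the algebra structure given by that isomorphism, a $C_3$-cover of $Z$. Over the smooth locus $Z^{\circ}=Z\setminus\{p_1,p_2,p_3\}$ the reflexive sheaf $\mathcal O_Z(L)$ is invertible and $3L$ is trivial, so $X\to Z$ is \'etale there; at each $p_i$ the class $L$ generates the local class group of the singularity $\tfrac13(1,2)=\bbC^2/\mu_3$, so the cover is locally the canonical quotient map $\bbC^2\to\bbC^2/\mu_3$, which is smooth and totally ramified over $p_i$. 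Hence $X$ is a $C_3$-cover branched exactly at the three singular points (and $X$ is smooth), as required, and everything reduces to producing $L$.

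To produce $L$ I pass to the minimal resolution $f\colon\tilde Z\to Z$. Being of general type with $p_g=0$ forces $\chi(\mathcal O_{\tilde Z})=1$ and $q=0$, so the exponential sequence gives $\Pic(\tilde Z)\cong H^2(\tilde Z,\bbZ)$; let $F$ be its free quotient, a unimodular lattice of rank $b_2(\tilde Z)=c_2(\tilde Z)-2=7$ (as $c_2=12\chi-K^2=9$). The exceptional locus is three $A_2$-configurations of $(-2)$-curves, spanning a sublattice $R\cong A_2^{\oplus3}$ of rank $6$, and $\Cl(Z)\cong\Pic(\tilde Z)/R$. The localization map $\Cl(Z)\to\bigoplus_i\Cl(\widehat{\mathcal O}_{Z,p_i})$ is, in each factor, the reduction of the intersection vector $(D\cdot E_{ij})_j$ modulo the $A_2$-Gram lattice, i.e. the map onto the discriminant group $\mathrm{disc}(R)\cong(\bbZ/3)^3$. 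Torsion classes of $\tilde Z$ pair trivially, so the torsion subgroup of $\Cl(Z)$ maps onto $\bar R/R$, where $\bar R$ is the saturation of $R$ in $F$. The key structural claim is that $\bar R/R$ is a \emph{nonzero} isotropic subgroup of $\mathrm{disc}(R)$: if $R$ were primitive, Nikulin's comparison $\mathrm{disc}(R)\cong-\mathrm{disc}(R^{\perp})$ for a primitive sublattice of a unimodular lattice would force the order-$27$ group $(\bbZ/3)^3$ to be the cyclic discriminant group of the rank-$1$ lattice $R^{\perp}$, which is impossible; and since the Witt index of $\mathrm{disc}(A_2)^{\oplus3}$ is $1$, necessarily $\bar R/R\cong\bbZ/3$.

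The combinatorial heart is to check that every nonzero isotropic vector of $\mathrm{disc}(A_2)^{\oplus3}$ has all three coordinates nonzero. The discriminant quadratic form of $A_2$ is, up to sign and scaling, $x\mapsto x^2$ on $\bbF_3$, so on the three copies it is $Q(a,b,c)=a^2+b^2+c^2$. Over $\bbF_3$ a square is $0$ or $1$, so $Q(a,b,c)$ is the number of nonzero coordinates modulo $3$, and it vanishes exactly when that number is $0$ or $3$. A nonzero isotropic vector therefore has full support, i.e. each coordinate is a generator of $\bbZ/3$. Choosing $L$ to be a torsion class of $\Cl(Z)$ mapping to a generator $v$ of the isotropic line $\bar R/R$ then yields a class of order $3$ whose localization generates every $\Cl(\widehat{\mathcal O}_{Z,p_i})$, which is exactly the input needed in the first paragraph.

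The main obstacle is the lattice-theoretic step: identifying the image of $\mathrm{Tors}\,\Cl(Z)$ with $\bar R/R$ and proving that $R=A_2^{\oplus3}$ is non-primitive in $F$. Once this is in place, the existence of a full-support isotropic vector and the smoothness and ramification of the associated cover are essentially formal. I would also double-check the normalization of the $\bbQ/2\bbZ$-valued discriminant form so that ``isotropic'' is taken correctly; the sanity check that $R^{\perp}=\bbZ K_{\tilde Z}$ with $K_{\tilde Z}^2=3$, matching $\mathrm{disc}(R^{\perp})\cong\bbZ/3$, confirms the bookkeeping.
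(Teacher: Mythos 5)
Your lattice-theoretic core is the same as the paper's: both proofs use that $H^2(\tilde Z,\bbZ)_{free}$ is unimodular of rank $7$, that $R=\calR_1\oplus\calR_2\oplus\calR_3\cong A_2^{\oplus 3}$ has $3$-elementary discriminant group of length $3$ while $R^{\perp}$ has rank $1$, and conclude that $R$ is not primitive, so there is a class whose triple is supported on the exceptional locus. Your computation that every nonzero isotropic element of $\disc(A_2)^{\oplus 3}$ has full support is a nice explicit justification of what the paper merely asserts (that the branch divisor $B$ has all coefficients nonzero modulo $3$ at each of the three points, so the cover is branched at all three singularities and the total space is smooth), and your relative Spec construction over $Z$ is a legitimate repackaging of the paper's "take the cover of $\tilde Z$ branched along $B$ and contract."

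The genuine gap is in the production of $L$. You need a class of order \emph{exactly} $3$ in $\Cl(Z)$ mapping to a generator of $\bar R/R$, and you get it by "choosing $L$ to be a torsion class of $\Cl(Z)$ mapping to a generator." But $\mathrm{Tors}\,\Cl(Z)$ is an extension of $\bar R/R\cong\bbZ/3$ by $T=\mathrm{Tors}\,H^2(\tilde Z,\bbZ)$, and when $T$ has $3$-torsion such an extension need not contain any order-$3$ element surjecting onto $\bar R/R$: if the relevant $3$-primary piece is $\bbZ/9\to\bbZ/3$, every preimage of a generator has order $9$. Replacing $L$ by $mL$ with $m$ prime to $3$ only disposes of the prime-to-$3$ part of the order; it cannot help when the $3$-part exceeds $3$. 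This is not a vacuous worry here: $T\cong H_1(\tilde Z,\bbZ)\cong\pi_1(Z)^{ab}$, and by the Cartwright--Steger computation quoted in the paper, $\pi_1(Z)$ can be $C_3$, $C_6$, $S_3$, $C_{14}$, etc., so $3$-torsion genuinely occurs. This is exactly the case the paper's proof spends its second half on: writing $3L=B+\tau$ in $\Pic(\tilde Z)$ with $\tau$ torsion, it reduces to $\tau$ of order $3^t$, and when $\tau\neq 0$ it does \emph{not} claim an honest $3$-torsion class exists; instead it passes to the unramified $C_{3^t}$-cover $p:V\to\tilde Z$ trivializing $\tau$, forms the $C_3$-cover $V'\to V$ attached to $p^*L$ (where now $3p^*L=p^*B$ holds exactly), lifts the deck transformation $g$ to $V'$, and takes $V'/\langle g\rangle\to\tilde Z$. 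Your argument needs this d\'evissage (or some equivalent) to be complete; as written, the existence of your $L$ fails precisely when $\pi_1(Z)$ has $3$-torsion.
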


\begin{proof}
We use a lattice theoretic argument. Consider the cohomology
lattice $$H^2(\tilde{Z}, \mathbb{Z})_{free}:=H^2(\tilde{Z},
\bbZ)/(torsion)$$ which is unimodular of signature $(1, 6)$ under
intersection pairing. Since $Z$ is a $\mathbb Q$-homology
projective plane, $p_g(\tilde{Z})=q(\tilde{Z})=0$ and hence
Pic$(\tilde{Z})=H^2(\tilde{Z}, \bbZ)$. Let $\mathcal R_i\subset
H^2(\tilde{Z}, \mathbb{Z})_{free}$ be the sublattice spanned by
the numerical classes of the components of $f^{-1}(p_i)$. Consider
the sublattice $\mathcal R_1\oplus\mathcal R_2\oplus\mathcal R_3$.
Its discriminant group is 3-elementary of length 3, and its
orthogonal complement is of rank 1. It follows that there is a
divisor class $L \in$ Pic$(\tilde{Z})$ such that $$3L=B+\tau$$ for
some torsion divisor $\tau$,  where $B$ is an integral divisor
supported on the six $(-2)$-curves contracted to the points $p_1,
p_2, p_3$ by the map $\tilde{Z}\to Z$. Here all coefficients of
$B$ are greater than 0 and less than 3.

If $\tau=0$, $L$ gives a $C_3$-cover of $\tilde{Z}$ branched along
$B$, hence yielding a $C_3$-cover $X\to Z$ branched at the three
points $p_1, p_2, p_3$. Clearly, $X$ is a nonsingular surface.

 If $\tau\neq 0$,
let $m$ denote the order of $\tau$. Write $m=3^tm'$ with $m'$ not
divisible by 3. By considering $3(m'L)=m'B+ m'\tau$, and by
putting $B'=m'B$(modulo 3), $\tau'=m'\tau$,  we may assume that
$\tau$ has order $3^t$. The torsion bundle $\tau$ gives an
unramified $C_{3^t}$-cover $$p: V\to \tilde{Z}.$$ Let $g$ be the
corresponding automorphism of $V$. Pulling $3L=B+\tau$ back to
$V$, we have
$$3p^*L=p^*B.$$ Obviously, $g$ can be linearized on the line bundle $p^*L$,
hence gives an automorphism of order $3^t$ of the total space of
$p^*L$. Let $V'\to V$ be the $C_3$-cover given by $p^*L$. We
regard $V'$ as a subvariety of the total space of $p^*L$. Since
$g$ leaves invariant the set of local defining equations for $V'$,
$g$ restricts to an automorphism of $V'$ of order $3^t$. Thus we
have a $C_3$-cover
$$V'/\langle g\rangle \to \tilde{Z}.$$ This yields a $C_3$-cover
$X\to Z$ branched at the three points $p_1, p_2, p_3$. Clearly,
$X$ is a nonsingular surface.
\end{proof}

Since $Z$ has only rational double points, the adjunction formula
gives $K_Z^2=K_{\tilde{Z}}^2=3$. Hence $K_X^2=3K_{Z}^2=9$.  The
smooth part $Z^0$ of $Z$ has Euler number
$e(Z^0)=e(\tilde{Z})-9=0$, so $e(X)=3e(Z^0)+3=3$. This shows that
$X$ is a ball quotient with $p_g(X)=q(X)$. It is known that such a
surface has $p_g(X)=q(X)\le 2$. (See the paragraph before
Proposition \ref{lf}.) In our situation $X$ admits an order 3
automorphism, and Proposition \ref{lf} eliminates the possibility
of $p_g(X)=q(X)= 1$.

It remains to exclude the possibility of $p_g(X)=q(X)= 2$.\\
Suppose that $p_g(X)=q(X)= 2$. Consider the Albanese map $a: X\to
Alb(X)$. It induces a map $\bar{a}: Z\to Alb(X)/\sigma$, where
$\sigma$ is the order 3 automorphism of $X$ corresponding to the
$C_3$-cover $X\to Z$. Since $Z$ has Picard number 1 and
$p_g(Z)=q(Z)=0$,  Proposition \ref{p=q=2} gives a contradiction.

\subsection{The case: $Z$ has 4 singular points of type
$\frac{1}{3}(1,2)$}

Let $p_1, p_2, p_3, p_4$ be the four singular points  of $Z$, and
$f:\tilde{Z}\to Z$  the minimal resolution.

\begin{lemma}\label{4A2} If there is a $C_3$-cover $Y\to Z$ branched at three of
the four singular points of $Z$, then the minimal resolution
$\tilde{Y}$ of $Y$ has $K_{\tilde{Y}}^2=3$, $e(\tilde{Y})=9$ and
$p_g(\tilde{Y})=q(\tilde{Y})=0$.
\end{lemma}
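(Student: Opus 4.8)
The plan is to determine the singularities of $Y$ by a local analysis of the cover, read off $K^2_{\tilde Y}$, $e(\tilde Y)$ and $\chi(\calO_{\tilde Y})$ from the covering data, and finally force $q(\tilde Y)=0$ by a global argument. For the \emph{local structure}, recall that $Y\to Z$ is étale over the smooth locus $Z^0$ and that the local fundamental group of each $A_2$-point $\tfrac13(1,2)$ is $C_3$ (its link is $S^3/C_3$). Being ``branched at $p_1,p_2,p_3$ but not at $p_4$'' means the defining homomorphism $\pi_1(Z^0)\to C_3$ is surjective on the local $\pi_1$ at $p_1,p_2,p_3$ and trivial at $p_4$. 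Over each $p_i$ $(i\le 3)$ the cover is the universal cover of the link, so $Y$ is smooth there with a single preimage; over $p_4$ the trivial monodromy yields three disjoint copies, i.e. three $A_2$-points. Thus $Y$ has exactly three $A_2$-singularities, all over $p_4$, the deck automorphism $\sigma$ of order $3$ permutes them cyclically and fixes the three smooth points over $p_1,p_2,p_3$, where $d\sigma$ has eigenvalues $\zeta,\zeta^2$ (fixed points of type $\tfrac13(1,2)$).

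For the \emph{global invariants}: since the branch locus has codimension $2$, $K_Y=\pi^*K_Z$, whence $K_{\tilde Y}^2=K_Y^2=3K_Z^2=3$ (the $A_2$-points are crepant). For the Euler number I use additivity: $e(Z)=3$, so $e(Z^0)=-1$; the étale part contributes $3\cdot(-1)=-3$, the three points over $p_1,p_2,p_3$ contribute $3$, and the three $A_2$-points over $p_4$ contribute $3$, giving $e(Y)=3$; resolving the three $A_2$-points adds $6$, so $e(\tilde Y)=9$. Noether's formula then gives $\chi(\calO_{\tilde Y})=\tfrac1{12}(3+9)=1$, i.e. $p_g(\tilde Y)=q(\tilde Y)$.

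It remains to prove $q(\tilde Y)=0$, and this is the heart of the matter. Since $K_Z$ is ample and $\pi$ is finite, $K_Y=\pi^*K_Z$ is ample, so $\tilde Y$ is a \emph{minimal} surface of general type. Suppose $q:=q(\tilde Y)\ge1$. Taking a $\sigma$-fixed point as base point, the Albanese map $a\colon\tilde Y\to A:=Alb(\tilde Y)$ is $\sigma$-equivariant with $\sigma$ fixing $0$. The quotient $W:=\tilde Y/\langle\sigma\rangle$ is birational to $Z$ with rational singularities, so $q(W)=q(\tilde Z)=0$; hence $H^{1,0}(\tilde Y)$ has no $\sigma$-invariant class and $\sigma$ acts on $A$ with only finitely many fixed points. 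By Debarre's inequality $K^2\ge 2p_g$ for irregular minimal surfaces of general type, $3\ge 2q$ forces $q=1$. Then $A$ is an elliptic curve on which $\sigma$ acts as an order-$3$ automorphism without invariant $1$-form, so $A/\langle\sigma\rangle\cong\bbP^1$ and $a$ descends to a surjection $W\to\bbP^1$. The exceptional rational curves of $W\to Z$ over $p_4$ go to points, so this surjection factors through $Z$, exhibiting $Z$ as fibred over $\bbP^1$ — impossible, as $Z$ has Picard number $1$. Hence $q(\tilde Y)=p_g(\tilde Y)=0$.

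The main obstacle is exactly this last step. The numerical data $K^2=3$, $e=9$, $\chi=1$ are consistent with every value $p_g=q\in\{0,1,2,3\}$, and one checks that the topological and holomorphic Lefschetz fixed point formulas for $\sigma$ (three isolated fixed points of type $\tfrac13(1,2)$) are satisfied identically in $q$, so they cannot detect the irregularity. What breaks the tie is the Picard number $1$ of $Z$ combined with the $\sigma$-equivariant Albanese map. If one prefers to avoid Debarre's inequality, Castelnuovo--de Franchis bounds $q\le 3$, the case $q=1$ is handled as above, and the case $q=2$ is excluded by Proposition \ref{p=q=2}, since the induced map $\bar a\colon W\to Alb(\tilde Y)/\langle\sigma\rangle$ factors through the Picard-number-$1$ surface $Z$.
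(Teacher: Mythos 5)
Your main line of argument is correct, but it takes a genuinely different route from the paper at the decisive step, namely the vanishing of the irregularity. The paper only gets $p_g(\tilde Y)=q(\tilde Y)\le 2$ indirectly: it invokes the second cover $X\to Y$ (whose existence is Lemma \ref{2covers}) so that $X$ has $K_X^2=3c_2(X)=9$, to which the Castelnuovo--de Franchis/Hacon--Pardini paragraph preceding Proposition \ref{lf} applies; it then kills $q=2$ by Proposition \ref{p=q=2}, and kills $q=1$ by descending the Albanese fibration to $\tilde Z$ and counting fibre components to force $\rho(\tilde Z)\ge 10>9$. You instead observe that $K_Y=\pi^*K_Z$ is ample, so $\tilde Y$ is a \emph{minimal} surface of general type, and invoke Debarre's inequality $K^2\ge 2p_g$ for irregular minimal surfaces of general type, which with $K^2_{\tilde Y}=3$ leaves only $q=1$; you then exclude $q=1$ by descending the $\sigma$-equivariant Albanese map to a surjection $Z\to\bbP^1$, contradicting the fact that a normal projective surface of Picard number 1 cannot fibre over a curve (the same fact the paper uses inside the proof of Proposition \ref{p=q=2}). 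Your route is more self-contained relative to this lemma: it needs neither Lemma \ref{2covers}, nor Proposition \ref{p=q=2}, nor Proposition \ref{ab}, nor Hacon--Pardini, and it removes the paper's forward reference (the paper completes the proof of this lemma only in the paragraph after Lemma \ref{2covers}); the price is importing Debarre's theorem, which the paper never uses. Your $q=1$ argument, fibring $Z$ itself over $\bbP^1$, is also somewhat cleaner than the paper's fibre-component count on $\tilde Z$; both are valid.

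Two caveats. First, you assert that $K_Z$ is ample without justification; it does follow from the hypotheses (the cusps are canonical singularities, so $K_{\tilde Z}=f^*K_Z$ is nef and $K_Z^2=K_{\tilde Z}^2=1>0$, and on a normal surface of Picard number 1 a nef class with positive square is ample), but you should say this, since minimality of $\tilde Y$ is exactly what Debarre's inequality requires. Second, your fallback route ``avoiding Debarre'' is incomplete as stated: Castelnuovo--de Franchis gives $q\le 3$ only after one rules out fibrations of $\tilde Y$ over curves of genus $\ge 2$, and the paper's argument for this (the paragraph before Proposition \ref{lf}) uses $c_2=3$, whereas $c_2(\tilde Y)=9$, so Zeuthen--Segre is no longer enough and one needs, e.g., Arakelov's inequality $K^2\ge 8(g_B-1)(g_F-1)$; moreover, that route never excludes $q=3$, which the paper handles precisely via the cover $X$ and Hacon--Pardini. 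Since the fallback is optional, neither point affects your main proof.
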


\begin{proof} We may assume that the three points are $p_1, p_2,
p_3$. Note that $Y$ has 3 singular points of type
$\frac{1}{3}(1,2)$, the pre-image of $p_4$. Let $\tilde{Y}\to Y$
be the minimal resolution. It is easy to see that
$K_{\tilde{Y}}^2=3$, $e(\tilde{Y})=9$ and
$p_g(\tilde{Y})=q(\tilde{Y})$.

Suppose that $p_g(\tilde{Y})=q(\tilde{Y})=1$. Consider the
Albanese fibration $\tilde{Y}\to Alb(\tilde{Y})$. It induces a
fibration $Y\to Alb(\tilde{Y})$. Let $\sigma$ be the order 3
automorphism of $Y$ corresponding to the $C_3$-cover $Y\to Z$. It
induces a fibration $\phi:\tilde{Z}\to
Alb(\tilde{Y})/\langle\sigma\rangle$. Since $q(Z)=0$, we have
$Alb(\tilde{Y})/\langle\sigma\rangle\cong\mathbb P^1$. The eight
$(-2)$-curves of $\tilde{Z}$ are contained in a union of fibres of
$\phi$. It follows that $\tilde{Z}$ has Picard number $\ge
8+2=10$, a contradiction.

Suppose that $p_g(\tilde{Y})=q(\tilde{Y})=2$. Consider the
Albanese map $a: \tilde{Y}\to Alb(\tilde{Y})$.  It contracts the
six $(-2)$-curves of $\tilde{Y}$, hence the induced map $\bar{a}:
\tilde{Y}/\langle\sigma\rangle\to
Alb(\tilde{Y})/\langle\sigma\rangle$ factors through a surjective
map $\tilde{Y}/\langle\sigma\rangle\to Z$, where $\sigma$ is the
order 3 automorphism of $\tilde{Y}$ corresponding to the
$C_3$-cover $Y\to Z$. Since $Z$ has Picard number 1 and
$p_g(Z)=q(Z)=0$, Proposition \ref{p=q=2} gives a contradiction.

The possibility of $p_g(\tilde{Y})=q(\tilde{Y})\ge 3$ can be ruled
out by considering a $C_3$-cover $X\to Y$ branched at the three
singular points of $Y$. See the paragraph below Lemma
\ref{2covers}.
\end{proof}

\begin{lemma}\label{2covers} There is a $C_3$-cover $Y\to Z$ branched at three of
the four singular points of $Z$, and a $C_3$-cover $X\to Y$
branched at the three singular points of $Y$.
\end{lemma}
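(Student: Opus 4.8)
The plan is to build the two covers in turn, and to observe that the second one requires no new work. Once the first cover $Y\to Z$ is in hand, $Y$ will be a $\bbQ$-homology projective plane with exactly three singular points of type $\frac{1}{3}(1,2)$ (the pre-image of $p_4$), and by Lemma \ref{4A2} its minimal resolution $\tilde Y$ satisfies $K_{\tilde Y}^2=3$, $e(\tilde Y)=9$, $p_g(\tilde Y)=q(\tilde Y)=0$. In particular $b_2(\tilde Y)=7$, the intersection form on $H^2(\tilde Y,\bbZ)_{free}$ is unimodular of signature $(1,6)$, and $\Pic(\tilde Y)=H^2(\tilde Y,\bbZ)$. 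These are precisely the inputs used in the proof of Lemma \ref{3A2}, so that argument applies verbatim to $Y$ and produces a $C_3$-cover $X\to Y$ branched at the three singular points of $Y$. Thus the entire content of the lemma is the construction of the first cover, branched at three of the four points.

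For that I would argue on the sublattice $T:=\calR_1\oplus\calR_2\oplus\calR_3\oplus\calR_4\cong A_2^{\oplus 4}$ of the unimodular lattice $H^2(\tilde Z,\bbZ)_{free}$ of signature $(1,8)$ and rank $9$. As in Lemma \ref{3A2}, a $C_3$-cover branched exactly at the points $p_i$ with $i$ in a subset $I$ corresponds to a class $L\in\Pic(\tilde Z)$ with $3L=B+\tau$, where $\tau$ is torsion and $B$ is supported on the $(-2)$-curves over the $p_i$, $i\in I$, with all coefficients strictly between $0$ and $3$. Dividing out torsion, this amounts to finding a vector $v$ in the saturation $T^{\mathrm{sat}}:=(T\otimes\bbQ)\cap H^2(\tilde Z,\bbZ)_{free}$ whose image $\bar v$ in $T^*/T\cong(\bbZ/3)^4$ is supported on exactly the coordinates indexed by $I$. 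So I must show two things: (i) $T^{\mathrm{sat}}\neq T$, and (ii) every nonzero class of $T^{\mathrm{sat}}/T$ has exactly three nonzero coordinates.

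Step (ii) is the crux, and it is a one-line computation with the discriminant form of $A_2$. Writing $v=(v_1,\dots,v_4)$ with $v_i\in A_2^*$, one has $3v_i^2\equiv 1\pmod 3$ whenever $\bar v_i\neq 0$ and $3v_i^2\equiv 0\pmod 3$ whenever $\bar v_i=0$; since $v\in H^2(\tilde Z,\bbZ)_{free}$ forces $v^2\in\bbZ$, reducing $3v^2=\sum_i 3v_i^2$ modulo $3$ yields $\#\{i:\bar v_i\neq 0\}\equiv 0\pmod 3$, so the weight of $\bar v$ is $0$ or $3$. Hence any nonzero $\bar v$ is supported on precisely three of the four configurations, so the first cover is automatically branched at three points rather than four — this is the arithmetic mechanism underlying Theorem \ref{main1}$(2)$. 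For step (i) I would use orthogonal-complement theory: $S:=T^\perp$ has rank $1$ and is primitive, so the glue group $\Gamma:=H^2(\tilde Z,\bbZ)_{free}/(T\oplus S)$ always embeds in $T^*/T=(\bbZ/3)^4$; if $T$ were also primitive, $\Gamma$ would in addition embed in the cyclic group $A_S$, forcing $\Gamma$ to be cyclic and $3$-torsion, hence $|\Gamma|\le 3$. But unimodularity gives $|\Gamma|=\sqrt{|\disc T|\cdot|\disc S|}=9\sqrt{|\disc S|}\ge 9$, a contradiction. Therefore $T$ is non-primitive and $T^{\mathrm{sat}}/T\neq 0$.

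Choosing a nonzero $\bar v$, and hence a branch divisor $B$ supported on three of the four $A_2$-configurations, I would conclude exactly as in Lemma \ref{3A2}: if the torsion correction $\tau$ vanishes, $L$ already defines the desired $C_3$-cover of $\tilde Z$, hence of $Z$; if $\tau\neq 0$ of order $3^t$, I would pass to the unramified $C_{3^t}$-cover $p\colon V\to\tilde Z$ given by $\tau$, linearize the corresponding automorphism $g$ on $p^*L$, form the $C_3$-cover $V'\to V$ defined by $p^*L$, and descend by $\langle g\rangle$ to a $C_3$-cover of $\tilde Z$ branched at the three chosen points. I expect the main obstacle to be the bookkeeping linking (i)–(ii) to the geometry: verifying that the weight-$3$ vector really produces a branch divisor with all coefficients in $\{1,2\}$ on exactly three configurations, and that the torsion manipulation preserves this support. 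Both points are controlled by the machinery already established in the proof of Lemma \ref{3A2}, so once steps (i) and (ii) are in place the remainder is routine.
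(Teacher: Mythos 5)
Your construction of the first cover $Y\to Z$ is correct and is essentially the paper's own argument: the paper works with the same sublattice $\calR_1\oplus\calR_2\oplus\calR_3\oplus\calR_4\cong A_2^{\oplus 4}$ inside the unimodular lattice $H^2(\tilde Z,\bbZ)_{free}$ of signature $(1,8)$, and deduces non-primitivity from the length-$4$ discriminant group against the rank-$1$ orthogonal complement. Your weight computation with the discriminant form (every nonzero glue vector has support of size exactly $3$) is a clean justification of what the paper only asserts, namely that any such class is automatically supported over three of the four points. The genuine gap is in your treatment of the second cover, which you declare to be ``no new work.''

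To run the argument of Lemma \ref{3A2} verbatim on $Y$ you need $\Pic(\tilde Y)=H^2(\tilde Y,\bbZ)$ and that $H^2(\tilde Y,\bbZ)_{free}$ be unimodular of signature $(1,6)$ with the orthogonal complement of the three $A_2$'s of rank $1$; all of this requires $p_g(\tilde Y)=q(\tilde Y)=0$ (otherwise the signature is $(1+2p_g,\,\cdot\,)$ and $b_2(\tilde Y)=7+4q$), and you obtain this by citing Lemma \ref{4A2}. But the paper's proof of Lemma \ref{4A2} is deliberately left incomplete at that point: it rules out $p_g(\tilde Y)=q(\tilde Y)=1$ and $2$ directly, and defers the case $p_g(\tilde Y)=q(\tilde Y)\ge 3$ to ``the paragraph below Lemma \ref{2covers},'' where it is excluded precisely by using the second cover $X\to Y$ — the cover you are trying to construct — to produce a surface $X$ with $K_X^2=3c_2(X)=9$, hence $p_g(X)=q(X)\le 2$, hence $p_g(Y)=q(Y)\le 2$. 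So your proof of the second half of Lemma \ref{2covers} rests on a lemma whose proof rests on the second half of Lemma \ref{2covers}; within the paper's logical structure this is circular, and the dependence is not cosmetic, since without the bound on $p_g(\tilde Y)$ your lattice argument on $\tilde Y$ does not start. This is exactly why the paper extracts \emph{two} classes $L_1,L_2\in\Pic(\tilde Z)$, with $3L_i=B_i+\tau_i$ and $B_i$ supported on $\cup_{j\neq i}f^{-1}(p_j)$ (your order estimate $|\Gamma|\ge 9$ plus the weight-$3$ constraint shows the glue group cannot consist of multiples of a single vector, so two distinct supports do occur), uses $L_1$ to build $Y\to Z$, and then transports $L_2$ to $\tilde Y$ by push-forward: with $\psi:\bar Y\to\tilde Z$ the normalized cover and $f:\bar Y\to\tilde Y$ the induced surjection, the relation $3f_*(\psi^*L_2)=f_*(\psi^*B_2)+f_*(\psi^*\tau_2)$ holds with $f_*(\psi^*B_2)$ integral, supported on the exceptional locus of $\tilde Y\to Y$, with coefficients in $(0,3)$, and the covering construction of Lemma \ref{3A2} then applies on $\tilde Y$ with no knowledge of $p_g(\tilde Y)$, $q(\tilde Y)$, or its cohomology lattice. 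To repair your proof you would either have to adopt this push-forward route, or supply an independent proof that $p_g(\tilde Y)=q(\tilde Y)\le 2$ (e.g.\ by classification results for irregular surfaces of general type), which is precisely the work the paper's trick avoids.
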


\begin{proof}
The existence of two $C_3$-covers can be proved by a lattice
theoretic argument. Note that Pic$(\tilde{Z})=H^2(\tilde{Z},
\bbZ)$.  We know that $H^2(\tilde{Z}, \bbZ)_{free}$ is a
unimodular lattice of signature $(1, 8)$ under intersection
pairing. Let $\mathcal R_i\subset H^2(\tilde{Z},
\mathbb{Z})_{free}$ be the sublattice spanned by the numerical
classes of the components of $f^{-1}(p_i)$. Consider the
sublattice $\mathcal R_1\oplus\mathcal R_2\oplus\mathcal
R_3\oplus\mathcal R_4$. Its discriminant group is 3-elementary of
length 4, and its orthogonal complement is of rank 1. It follows
that there are two divisor classes $L_1, L_2 \in$ Pic$(\tilde{Z})$
such that $$3L_1=B_1+\tau_1, \quad 3L_2=B_2+\tau_2$$ for some
torsion divisors $\tau_i$, where $B_i$ is an integral divisor
supported on the six $(-2)$-curves lying over three of the four
points $p_1, p_2, p_3, p_4$. We may assume that $B_i$ is supported
on $ \cup_{j\neq i} f^{-1}(p_j)$ and all coefficients of $B_i$ are
greater than 0 and less than 3.

By the same argument as in Lemma \ref{3A2}, we can take a
$C_3$-cover $Y\to Z$ branched at the three points $p_2, p_3, p_4$.
Then $Y$ has 3 singular points of type $\frac{1}{3}(1,2)$, the
pre-image of $p_1$. This can be done by using the line bundle
$L_1$ if $\tau_1=0$. Otherwise, we first take an unramified cover
$p: V\to \tilde{Z}$ corresponding to $\tau_1$ and then lift the
covering automorphism $g$ to the $C_3$-cover $V'\to V$ given by
$p^*L_1$, then take the quotient $V'/\langle g\rangle$.

 Let $\psi:\bar{Y}\to\tilde{Z}$ be the
$C_3$-cover corresponding to the $C_3$-cover $Y\to Z$, composed
with a normalization. Then $\bar{Y}$ is a normal surface and there
is a surjection $f:\bar{Y}\to\tilde{Y}$. Now
$$3f_*(\psi^*L_2)=f_*(\psi^*B_2)+f_*(\psi^*\tau_2)$$ and
$f_*(\psi^*B_2)$ is an integral divisor supported on the
exceptional locus of $\tilde{Y}\to Y$ with coefficients greater
than 0 and less than 3. Now by the same argument as in Lemma
\ref{3A2}, there is a $C_3$-cover $X\to Y$ with $X$ nonsingular.
\end{proof}

It is easy to see that $K_{X}^2=9$, $e(X)=3$ and $p_g(X)=q(X)$.
Such a surface has $p_g(X)=q(X)\le 2$. (See the paragraph before
Proposition \ref{lf}.) It implies that $p_g(Y)=q(Y)\le 2$, which
completes the proof of Lemma \ref{4A2}.

By Lemma \ref{4A2}, $p_g(Y)=q(Y)=0$, so $Y$ has Picard number 1
and has three singular points of type $\frac{1}{3}(1,2)$. Then by
the previous subsection, $p_g(X)=q(X)=0$.

\subsection{The case: $Z$ has 3 singular points of type
$\frac{1}{7}(1,5)$}

Let $p_1, p_2, p_3$ be the three singular points  of $Z$ of type
$\frac{1}{7}(1,5)$. Then there is a $C_7$-cover $X\to Z$ branched
at the three points. In the case of $\pi_1(Z)=\{1\}$, this was
proved in \cite{K06}, p922. In our general situation, we consider
the lattice Pic$(\tilde{Z})/$(torsion), where $\tilde{Z}\to Z$ is
the minimal resolution. Then by the same lattice theoretic
argument as in \cite{K06}, there is a divisor class $L \in$
Pic$(\tilde{Z})=H^2(\tilde{Z}, \bbZ)$ such that $7L=B+\tau$ for
some torsion divisor $\tau$,  where $B$ is an integral divisor
supported on the exceptional curves of the map $\tilde{Z}\to Z$.
Here all coefficients of $B$ are not equal to 0 modulo 7. If
$\tilde{Z}$ is a $(2,4)$-elliptic surface and if $\tau\neq 0$,
then $2\tau=0$. By considering $7(2L)=2B$, and by putting $L'=2L$
and $B'=2B$, we get $7L'=B'$. This implies the existence of a
$C_7$-cover $X\to Z$ branched at the three points $p_1, p_2, p_3$.
Then $X$ is a nonsingular surface.

Note that $K_{\tilde{Z}}^2=0$. So by the adjunction formula,
$K_{Z}^2=\frac{9}{7}$. It is easy to see that $K_{X}^2=9$,
$e(X)=3$ and $p_g(X)=q(X)$. Such a surface has $p_g(X)=q(X)\le 2$.
(See the paragraph before Proposition \ref{lf}.) Now by
Proposition \ref{lf}, $p_g(X)=q(X)= 0$.

\subsection{The case: $Z$ has 3 singular points of type
$\frac{1}{3}(1,2)$ and one of type $\frac{1}{7}(1,5)$}

Let $\tilde{Z}\to Z$ be the minimal resolution. Then $\tilde{Z}$
is a  $(2,3)$- or $(2,4)$-elliptic surface. Let
$$\phi:\tilde{Z}\to\mathbb P^1$$ be the elliptic fibration. Let
$Z'\to Z$ be the minimal resolution of the singular point of type
$\frac{1}{7}(1,5)$. Then $\phi:\tilde{Z}\to\mathbb P^1$ induces an
elliptic fibration
$$\phi':Z'\to\mathbb P^1.$$

\begin{lemma}\label{21to7}  \begin{enumerate}
\item There is a $C_3$-cover $Y\to Z$ branched at the three points of type
$\frac{1}{3}(1,2)$. The cover $Y$ has 3 singular points of type
$\frac{1}{7}(1,5)$.
\item The minimal resolution $\tilde{Y}$ of $Y$ is a
$(2,3)$- or $(2,4)$-elliptic surface. Its multiplicities are the
same as those of  $\tilde{Z}$.  Furthermore, every fibre of
$\tilde{Z}$ does not split in $\tilde{Y}$.
 \end{enumerate}
\end{lemma}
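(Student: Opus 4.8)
The plan is to reproduce the lattice construction of Lemma \ref{3A2} for assertion (1), to read off all numerical invariants of $\tilde Y$, and then to analyse the genus-one pencil that $\phi$ induces on $\tilde Y$; the one genuinely delicate point is the non-splitting statement, which I treat last.

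For (1), write $p_1,p_2,p_3$ for the cusps and $p_4$ for the point of type $\frac{1}{7}(1,5)$. Exactly as in Lemma \ref{3A2}, the sublattice $\mathcal{R}_1\oplus\mathcal{R}_2\oplus\mathcal{R}_3\subset H^2(\tilde{Z},\mathbb{Z})_{free}$ spanned by the six $(-2)$-curves over $p_1,p_2,p_3$ has $3$-elementary discriminant group of length $3$, so there is $L\in\Pic(\tilde{Z})$ with $3L=B+\tau$, where $B$ is supported on those six curves with all coefficients in $\{1,2\}$ and $\tau$ is torsion; disposing of $\tau$ by the unramified-cover-and-descend trick of Lemma \ref{3A2} produces a $C_3$-cover $\pi\colon Y\to Z$ branched exactly at $p_1,p_2,p_3$ (where it unwinds the $A_2$-singularities) and \'etale elsewhere. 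Over $p_4$ the cover is \'etale, and since the local fundamental group of $\frac{1}{7}(1,5)$ is $\mathbb{Z}/7$, which has no subgroup of index $3$ as $\gcd(3,7)=1$, the restriction of $\pi$ to a punctured neighbourhood of $p_4$ is the trivial $3$-sheeted cover. Hence $\pi^{-1}(p_4)$ consists of three points, each analytically isomorphic to $\frac{1}{7}(1,5)$, which proves (1).

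For (2) I first identify $\tilde Y$. Let $Z'\to Z$ resolve only $p_4$, so that $\phi'\colon Z'\to\mathbb{P}^1$ is the elliptic fibration and $\pi$ lifts to a $C_3$-cover $Y'\to Z'$ branched only at $p_1,p_2,p_3$. Since $Z'$ is smooth near the exceptional chain $E_{p_4}$ and that chain is simply connected, $\pi$ is trivial there, so the preimage of $E_{p_4}$ is three disjoint copies of $[-2,-2,-3]$; thus $Y'$ is smooth and $Y'\to Y$ is the minimal resolution, i.e. $\tilde Y=Y'$, carrying the genus-one pencil $\psi:=\phi'\circ(\tilde Y\to Z')$. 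As $\pi$ is \'etale in codimension one, $K_Y=\pi^{*}K_Z$, so $K_Y^2=3K_Z^2=3\cdot\tfrac37=\tfrac97$ (the cusps contribute nothing and the $\frac{1}{7}(1,5)$-point contributes $\tfrac37$); subtracting the discrepancies of the three $\frac{1}{7}(1,5)$-points of $Y$ gives $K_{\tilde Y}^2=0$, while a Euler-number stratification gives $e(\tilde Y)=12$, whence $\chi(\mathcal{O}_{\tilde Y})=1$ and (as $Y$ is again a $\mathbb{Q}$-homology projective plane) $p_g(\tilde Y)=q(\tilde Y)=0$. Because $\tilde Y$ admits a generically finite dominant morphism to $Z'$, whose resolution $\tilde Z$ has Kodaira dimension $1$, we get $\kappa(\tilde Y)\ge 1$; together with $K_{\tilde Y}^2=0$ this forces $\kappa(\tilde Y)=1$, so $\tilde Y$ is a minimal properly elliptic surface (in particular neither Enriques nor rational), i.e. an $(a',b')$-elliptic surface over $\mathbb{P}^1$ with $a'+b'\ge 5$.

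It remains to prove that no fibre of $\tilde Z$ splits and that the multiplicities match, and this is the crux. A general fibre $E$ of $\phi$ splits precisely when the $3$-torsion bundle $\mathcal{O}_{\tilde Z}(L)|_E$ is trivial, equivalently when $L$ is vertical modulo torsion, i.e. lies in the subgroup $V\subset\NS(\tilde Z)$ generated by all fibre components (every vertical class restricts trivially to $E$, and conversely any class trivial on the general fibre is vertical). Were $L$ vertical, then $B\equiv 3L\in 3V$ in $\NS(\tilde Z)$; but a direct check in $V\otimes\mathbb{F}_3$, using that the only relations among fibre components are that the components of each fibre sum with their multiplicities to the fibre class, gives $[B]\neq 0$, since on the fibre through an $A_2$-point the coefficients of $B$ are $1$ and $2$ on the two distinct $(-2)$-curves and $0$ on the remaining component. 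Hence $L$ is not vertical and no fibre splits. (Equivalently, a splitting would factor $\psi$ through a degree-$3$ base change $B\to\mathbb{P}^1$, making $\tilde Y$ a degree-$3$ base change of $\tilde Z$ and forcing $e(\tilde Y)=3\,e(\tilde Z)=36\neq 12$.) Consequently $\psi$ is an elliptic fibration over the same base $\mathbb{P}^1$ with each fibre of $\phi$ lying under a single connected fibre of $\psi$, and a local analysis of the cover at the two multiple fibres of $\phi$ shows their multiplicities are preserved; therefore $(a',b')=(a,b)\in\{(2,3),(2,4)\}$, with the same multiplicities as $\tilde Z$. The main obstacle throughout is exactly this interaction between the $C_3$-cover and the elliptic pencil: once the non-verticality of $L$ (equivalently the Euler-number discrepancy) is established, the remaining multiplicity bookkeeping is routine.
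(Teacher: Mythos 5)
Your part (1) and your computation of the numerical invariants of $\tilde Y$ ($K_{\tilde Y}^2=0$, $e(\tilde Y)=12$, $\kappa(\tilde Y)=1$) are sound and essentially coincide with the paper's. The genuine gap is at the step you yourself call the crux. Your mod-$3$ verticality obstruction silently assumes that each cusp pair lies inside an $I_3$-fibre, with one remaining component carrying coefficient $0$ in $B$; i.e.\ it assumes the configuration $\mu_1I_3+\mu_2I_3+\mu_3I_3+\mu_4I_3$. But at this stage of the argument the eight $(-2)$-curves could equally well lie in fibres $IV^*+\mu I_3$ or $IV^*+IV$; these configurations are excluded only in Theorem \ref{main2}, whose proof uses the present lemma, so you may not assume them away. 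In those configurations the three cusp pairs are necessarily the three legs of the $IV^*$-fibre (the chain over the $\frac{1}{7}(1,5)$-point cannot be a leg, because the $(-3)$-curve is a $6$-section disjoint from the cusp curves), and there your obstruction actually vanishes: choosing $B$ with coefficient $1$ on each end component and $2$ on each middle component, one has $B=f_{IV^*}-3C_0$, where $f_{IV^*}$ denotes the $IV^*$-fibre and $C_0$ its central component (of multiplicity $3$ in the fibre). Since $\tilde Z$ is a $(2,3)$-elliptic surface it has a multiple fibre of multiplicity $3$, with primitive class $P\in V$, so $f_{IV^*}\sim F\sim 3P$ (with $F$ a general fibre) and hence $B\sim 3(P-C_0)\in 3V$. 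The vertical class $L=P-C_0$ satisfies $3L\sim B$ exactly (so $\tau=0$) and defines a $C_3$-cover branched precisely at the three cusps whose general fibre splits. So no purely lattice-theoretic refinement of your computation can prove the non-splitting statement; genuinely geometric input is needed for the $IV^*$-cases, and that is exactly what the paper supplies: the $(-3)$-curve, being disjoint from the branch locus, splits into three $(-3)$-curves $E_1,E_2,E_3$, each a $2$-section of $\psi$; were a general fibre to split, $\tilde Y$ would have at most two multiple fibres, all of multiplicity $2$, so neither multiple fibre of $\phi'$ could split, forcing the degree-$3$ base-change map $\gamma:\mathbb{P}^1\to\mathbb{P}^1$ to be totally ramified over those two points and, by Riemann--Hurwitz, nowhere else; yet in each of the configurations (a), (b), (c) the fibres through the cusps cannot split either (the cover is genuinely branched there), giving a third totally ramified point --- a contradiction.

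Two smaller defects. First, your claim that $p_g(\tilde Y)=q(\tilde Y)=0$ ``as $Y$ is again a $\mathbb{Q}$-homology projective plane'' is unproved at that point and is essentially equivalent to part of what must be shown; it does follow from $\chi(\mathcal{O}_{\tilde Y})=1>0$, but only once the base of $\psi$ is known to be $\mathbb{P}^1$, i.e.\ after non-splitting is established. Second, the parenthetical Euler-number alternative is false: Euler numbers of elliptic surfaces are not multiplied by the degree under base change (fibre types change; for instance $IV^*$ pulls back to a smooth fibre under a totally ramified degree-$3$ base change), $\tilde Y$ is in any case a cover of $Z'$ rather than a base change of $\tilde Z$, and $e(\tilde Y)=12$ is computed from that cover and holds whether or not fibres split, so no contradiction can be extracted that way.
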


\begin{proof}  We may assume that $\tilde{Z}$ is a  $(2,3)$-elliptic
surface.  The case of $(2,4)$-elliptic surfaces was proved in
\cite{K10}.

(1) The existence of the triple cover can be proved in the same
way as in \cite{K06}, p920-921. Note that $Y$ has 3 singular
points of type $\frac{1}{7}(1,5)$, the pre-image of the singular
point of $Z$ of type $\frac{1}{7}(1,5)$.

(2) Consider the $C_3$-cover $\tilde{Y}\to Z'$ branched at the
three singular points of $Z'$. The elliptic fibration
$\phi':Z'\to\mathbb P^1$ induces an elliptic fibration
$\psi:\tilde{Y}\to\mathbb P^1$. Denote by $E$ the $(-3)$-curve in
$Z'$ lying over the singularity of type $\frac{1}{7}(1,5)$. It
does not pass through any of the 3 singular points of $Z'$, hence
splits in $\tilde{Y}$ to give three $(-3)$-curves $E_1$, $E_2$,
$E_3$.

Suppose that a general fibre of $Z'$ splits in $\tilde{Y}$. Since
$E$ is a 6-section, each $E_i$ will be a $2$-section of the
elliptic fibration $\psi:\tilde{Y}\to\mathbb P^1$. Thus, the map
from $E_i$ to the base curve $\mathbb P^1$ is of degree 2. It
implies that $\tilde{Y}$ has at most 2 multiple fibres and the
multiplicity of every multiple fibre is 2. Thus each multiple
fibre of $Z'$ does not split in $\tilde{Y}$. (Otherwise, it will
give 3 multiple fibres of the same multiplicity, a contradiction.)
Consider the base change map $\gamma: B_{\tilde{Y}}\cong \mathbb
P^1 \to B_{Z'}\cong \mathbb P^1$, which is of degree 3. It is
branched at the base points of the two multiple fibres of
$\phi':Z'\to\mathbb P^1$, so cannot have any more branch points.
The minimal resolution $\tilde{Z}$ contains nine curves whose dual
diagram is $$(-2)-(-2)\,\,\,\,
(-2)-(-2)\,\,\,\,(-2)-(-2)\,\,\,\,(-2)-(-2)-(-3).$$  Note that
every $(-2)$-curve on $\tilde{Z}$ is contained in a fiber.  The
eight $(-2)$-curves are contained in a union of fibres, only in
one of the following three cases. Here $\mu$ or $\mu_i$ is the
multiplicity of the fibre.

\medskip
$$(a)\,\,IV^*+\mu I_3,\,\,\, (b)\,\, IV^*+IV,\,\,\,(c)\,\, \mu_1 I_3+\mu_2I_3+\mu_3I_3+\mu_4I_3.$$

\medskip\noindent
In the first two cases, the $(-3)$-curve must intersects with
multiplicity 2 the central component of the $IV^*$-fibre. Thus,
the image in $Z'$ of the $IV^*$-fibre contains the 3 singular
points of $Z'$, so it does not split in $Y$. This means that the
base point of the $IV^*$-fibre is another branch point of the base
change map $\gamma$, a contradiction. In the last case, we also
get at least 3 branch points of $\gamma$, a contradiction.
Therefore, every fibre of $Z'$ does not split in $\tilde{Y}$. In
particular, the multiplicity of a fibre in $\tilde{Y}$ is the same
as that of the corresponding fibre in $\tilde{Z}$. Thus
$\tilde{Y}$ is an elliptic surface over ${\mathbb P}^1$ having 2
multiple fibres with multiplicity 2 and 3, resp. Since
$K_{\tilde{Z}}^2=0$ and $Z'$ has only rational double points, the
adjunction formula gives $K_{Z'}^2=K_{\tilde{Z}}^2=0$. Hence
$K_{\tilde{Y}}^2=3K_{Z'}^2=0$. In particular, $\tilde{Y}$ is
minimal. The smooth part $Z^0$ of $Z'$ has Euler number
$e(Z^0)=e(\tilde{Z})-9=3$, so $e(\tilde{Y})=3e(Z^0)+3=12$. This
shows that $\tilde{Y}$ is a $(2,3)$-elliptic surface.
\end{proof}

Now by the previous subsection, there is a $C_7$-cover $X\to Y$
branched at the three singular points such that $X$ is a fake
projective plane.

\section{Proof of Theorem \ref{main2}}

(1) was proved in Lemma \ref{21to7}.

\medskip
(2) As we have seen in the proof of Lemma \ref{21to7}, the eight
$(-2)$-curves on $\tilde{Z}$ are contained in a union of fibres,
only in one of the following three cases. Here $\mu$ or $\mu_i$ is
the multiplicity of the fibre.

$$(a)\,\,IV^*+\mu I_3,\,\,\, (b)\,\, IV^*+IV,\,\,\,(c)\,\, \mu_1 I_3+\mu_2I_3+\mu_3I_3+\mu_4I_3.$$

\medskip\noindent
Recall that every fibre in $\tilde{Z}$  does not split in
$\tilde{Y}$, and the $(-3)$-curve in $\tilde{Z}$ is a 6-section.
We will eliminate the first two cases. Let $Z'\to Z$ be the
minimal resolution of the singular point of type
$\frac{1}{7}(1,5)$.

\medskip
Case $(a):IV^*+\mu I_3$.  In this case, the surface $\tilde{Z}$
has a fibre of type $\mu' I_1$. Since the $(-3)$-curve in
$\tilde{Z}$ is a 6-section, it intersects with multiplicity 2 the
central component of the $IV^*$-fibre. Thus both the $\mu
I_3$-fibre and the $\mu' I_1$-fibre are disjoint from the branch
of the $C_3$-cover $\tilde{Y} \to Z'$. It is easy to see that
these two fibres will give a $\mu I_9$-fibre and a $\mu'
I_3$-fibre in $\tilde{Y}$, so $\tilde{Y}$ has Picard number $\ge
12$, a contradiction.

\medskip
Case $(b): IV^*+IV$. This case can be eliminated in a similar way
as above.  The $IV$-fibre on $\tilde{Z}$ does not contain any of
the $(-2)$-curves contracted by the map $\tilde{Z} \to Z'$. But
there is no unramified connected triple cover of a $IV$-fibre.

\medskip
(3) If the image in $Z'$ of the $\mu_i I_3$-fibre contains a
singular point of $Z'$, then it will give a $\mu_i I_1$-fibre in
$\tilde{Y}$. If it does not, then it will give a $\mu_i I_9$-fibre
in $\tilde{Y}$.

\section{${\mathbb Q}$-homology projective planes with cusps}

In this section we will prove Theorem \ref{main3}.

Let $Z$ be a ${\mathbb Q}$-homology projective plane with cusps,
i.e., singularities of type $\frac{1}{3}(1,2)$, only. Let $
\tilde{Z}\to Z$ be the minimal resolution.

Let $k$ be the number of cusps on $Z$. A ${\mathbb Q}$-homology
projective plane with quotient singularities can have at most 5
singular points, and the case with the maximum possible number of
quotient singularities was classified in \cite{HK}. According to
this classification, there is no ${\mathbb Q}$-homology projective
plane with 5 cusps. Thus we have $k\le 4$. It is easy to see that
$K_Z^2=K_{\tilde{Z}}^2=9-2k$. Since $K_Z^2>0$, $K_Z$ is not
numerically trivial. By Lemma 3.3 of \cite{HK}, the product of the
orders of local abelianized fundamental groups and $K_Z^2$ is a
positive square number. In our situation, the product is
$3^k(9-2k)$, and this number is a square only if $k=4$ or $3$.

Since $K_Z$ is not numerically trivial, either $K_Z$ or $-K_Z$ is
ample.

Assume that $K_Z$ is ample. Then $K_{\tilde{Z}}$ is nef, hence
$\tilde{Z}$ is a minimal surface of general type. By Theorem
\ref{main1}, $Z$ is the quotient of a fake projective plane by a
group of order 9 if $k=4$, by order 3 if $k=3$.

Assume that $-K_Z$ is ample. Then  $Z$ is a log del Pezzo surface
of Picard number 1 with 4 or 3 cusps. Assume that $Z$ has 3 cusps.
By a similar argument as in Section 2, there is a $C_3$-cover
$\mathbb{P}^2\to Z$ branched at the 3 cusps. It is easy to see
that the covering automorphism is a conjugate of the order 3
automorphism
$$\sigma:(x,y,z)\mapsto (x,\omega y,\omega^2 z).$$ Assume that $Z$ has 4
cusps. By a similar argument as in Section 2, there is a
$C_3^2$-cover $\mathbb{P}^2\to Z$ branched at the 4 cusps, the
composition of two $C_3$-covers. It is easy to see that the Galois
group is a conjugate of $\langle\sigma, \tau\rangle$, where
 $\sigma$ and
$\tau$ are the commuting order 3 automorphisms given by
$$\sigma(x,y,z)=(x,\omega y,\omega^2 z), \quad \tau(x,y,z)=(z, ax,
a^{-1}y),$$ where $a$ is a non-zero constant and
$\omega=exp(\frac{2\pi\sqrt{-1}}{3}).$

\begin{remark} (1) In the case (1) and (2), the fundamental group $\pi_1(Z)$ is given by the list of Cartwright and Steger. See Remark
0.3.

(2) One can construct a log del Pezzo surface of Picard number 1
with 4 or 3 cusps in many ways other than taking a global
quotient. One different way is to consider a rational elliptic
surface $V$ with 4 singular fibres of type $I_3$. Such an elliptic
surface can be constructed by blowing up $\mathbb P^2$ at the 9
base points of the Hesse pencil. Every section is a $(-1)$-curve.
Contracting a section, we get a nonsingular rational surface $W$
with eight $(-2)$-curves forming a diagram of type 4$A_2$.
Contracting these eight $(-2)$-curves, we get a log del Pezzo
surface of Picard number 1 with 4 cusps. On $W$, we contract a
string of two rational curves forming a diagram
$(-1)\textrm{---}(-2)$ to get a nonsingular rational surface with
six $(-2)$-curves forming a diagram of type 3$A_2$. Contracting
these six $(-2)$-curves, we get a log del Pezzo surface of Picard
number 1 with 3 cusps.
\end{remark}



\begin{thebibliography}{[BPV]}
\bibitem[Arm]{Arm} M. A. Armstrong, \textit{The fundamental group of the orbit space of a discontinuous
group}, Proc. Camb. Phil. Soc. {\bf 64} (1968), 299-301.
\bibitem[AS3]{AS3} M. F. Atiyah, and I. M. Singer,
\textit{The index of elliptic operators, III}, Ann. of Math. {\bf
87} (1968), 546-604.
\bibitem[Au]{Aubin} T. Aubin, \textit{\'Equations du type Monge-Amp\`ere sur les vari\'et\'es k\"ahleriennes compactes},
C. R. Acad. Sci. Paris Ser. A-B {\bf 283} (1976), no. 3, Aiii, A119--A121.
\bibitem[BHPV]{BHPV} W. Barth, K. Hulek, Ch. Peters, A. Van de Ven,
\textit{Compact Complex Surfaces}, second ed. Springer 2004.
\bibitem[CS]{CS} D. Cartwright, T. Steger,
\textit{Enumeration of the 50 fake projective planes}, C. R. Acad.
Sci. Paris, Ser. I {\bf 348} (2010), 11-13.
\bibitem[CS2]{CS2} D.
Cartwright, T. Steger, \textit{private communication}.
\bibitem[CCM]{CCM} F. Catanese, C. Ciliberto, M. Mendes Lopes,
\textit{On the classification of irregular surfaces of general
type with nonbirational bicanonical map}, Trans. Amer. Math. Soc.
{\bf 350} (1998), no. 1, 275-308.
\bibitem[D]{D} I. Dolgachev, \textit{Algebraic surfaces with $q=p_g=0$},
C.I.M.E. Algebraic surfaces, pp 97-215, Liguori Editori, Napoli
1981.
\bibitem[HP]{HP} C. D. Hacon, R. Pardini,
\textit{Surfaces with $p_g=q=3$}, Trans. Amer. Math. Soc. {\bf
354} (2002), no. 7, 2631-2638.
\bibitem[Hir]{Hir} F. Hirzebruch, \textit{Automorphe Formen und der Satz von Riemann-Roch }
in: 1958 Symposium International de Toplogia Algebraica, UNESCO, pp.129-144.

\bibitem[HK]{HK} D. Hwang and J. Keum, \textit{The maximum number of singular points on rational homology projective planes}, arXiv:0801.3021, to appear in J. Algebraic Geom.


\bibitem[K06]{K06} J. Keum, \textit{A fake projective
plane with an order 7 automorphism}, Topology {\bf 45} (2006),
919-927.
\bibitem[K08]{K08} J. Keum, \textit{Quotients of fake projective
planes}, Geom. Top. {\bf 12} (2008), 2497-2515.
\bibitem[K10]{K10} J. Keum, \textit{A fake projective
plane constructed from an elliptic surface with multiplicities
(2,4)}, preprint {\bf } (2010).
\bibitem[KK]{KK} V. S.  Kharlamov, V. M. Kulikov, \textit{On real structures on rigid surfaces}, Izv. Russ. Akad. Nauk. Ser. Mat. {\bf
66}, no. 1, (2002),  133-152; Izv. Math. {\bf 66}, no. 1, (2002),
133-150.
\bibitem[Kl]{Kl} B. Klingler, \textit{Sur la rigidit\'e de certains groupes fondamentaux, l'arithm\'eticit\'e des r\'eseaux hyperboliques
complexes, et les "faux plans projectifs"}, Invent. Math. {\bf
153} (2003), 105-143.
\bibitem[Mos]{Mos} G. D. Mostow, \textit{Strong
rigidity of locally symmetric spaces}, Annals Math. Studies {\bf
78}, Princeton Univ. Press, Princeton, N.J.; Univ. Tokyo Press,
Tokyo 1973.
\bibitem[Pi]{Pirola} G.P. Pirola, \textit{Surfaces with $p_g=q=3$}, Manuscripta Math. {\bf 108} (2002), no. 2, 163--170.

\bibitem[P]{P} G. Prasad, \textit{Volumes of S-arithmetic quotients of semi-simple groups},
Inst. Hautes \'Etudes Sci. Publ. Math. {\bf 69} (1989), 91-117
\bibitem[PY]{PY} G. Prasad, and S.-K. Yeung, \textit{Fake projective planes}, Invent. Math. {\bf 168} (2007),
321-370; Addendum, Invent. Math.  {\bf 182} (2010), 213-227.

\bibitem[Y]{Yau} S.-T. Yau, \textit{Calabi's conjecture and some new results in algebraic geometry},
Proc. Nat. Ac. Sc. USA {\bf 74} (1977), 1798-1799.
\bibitem[Ye]{Ye} S.-K. Yeung, \textit{Integrality and arithmeticity of cocompact lattices corresponding
to certain complex two-ball quotients of Picard number one}, Asian
J. Math. {\bf 8} (2004), 107-130; Erratum, Asian J. Math. {\bf 13}
(2009), 283-286.
\end{thebibliography}
\end{document}